\documentclass[10pt,a4paper]{article}

\usepackage{latexsym,amsfonts,amsmath,amssymb,amsthm,mathrsfs,color}

\newtheorem{theorem}{Theorem}
\newtheorem{lemma}{Lemma}

\newtheorem{remark}{Remark}
\newtheorem{proposition}{Proposition}
\newtheorem{definition}{Definition}
\newtheorem{corollary}{Corollary}
\usepackage[dvipdfm,
linkbordercolor={1 0 0},
citebordercolor={0 1 0},
urlbordercolor={0 1 1}]{hyperref}

\newcommand{\rd}{\, \mathrm{d}}
\newcommand{\bszero}{\boldsymbol{0}}
\newcommand{\bsc}{\boldsymbol{c}}
\newcommand{\bsd}{\boldsymbol{d}}
\newcommand{\bsk}{\boldsymbol{k}}
\newcommand{\bsl}{\boldsymbol{l}}
\newcommand{\bsr}{\boldsymbol{r}}
\newcommand{\bsp}{\boldsymbol{p}}
\newcommand{\bsq}{\boldsymbol{q}}
\newcommand{\bsx}{\boldsymbol{x}}
\newcommand{\bsy}{\boldsymbol{y}}

\newcommand{\bsalpha}{\boldsymbol{\alpha}}

\newcommand{\FF}{\mathbb{F}}
\newcommand{\NN}{\mathbb{N}}
\newcommand{\RR}{\mathbb{R}}

\newcommand{\Dcal}{\mathcal{D}}

\newcommand{\Jcal}{\mathcal{J}}
\newcommand{\Scal}{\mathcal{S}}
\newcommand{\per}{\mathrm{per}}

\newcommand{\wal}{\mathrm{wal}}
\newcommand{\wor}{\mathrm{wor}}

\allowdisplaybreaks

\begin{document}

\title{Optimal order quadrature error bounds for infinite-dimensional higher order digital sequences}

\author{Takashi Goda\thanks{Graduate School of Engineering, The University of Tokyo, 7-3-1 Hongo, Bunkyo-ku, Tokyo 113-8656, Japan (\tt{goda@frcer.t.u-tokyo.ac.jp})},
Kosuke Suzuki\thanks{School of Mathematics and Statistics, The University of New South Wales, Sydney 2052, Australia. ({\tt kosuke.suzuki1@unsw.edu.au})},
Takehito Yoshiki\thanks{School of Mathematics and Statistics, The University of New South Wales, Sydney 2052, Australia. ({\tt takehito.yoshiki1@unsw.edu.au})}}

\date{\today}

\maketitle

\begin{abstract}
Quasi-Monte Carlo (QMC) quadrature rules using higher order digital nets and sequences have been shown to achieve the almost optimal rate of convergence of the worst-case error in Sobolev spaces of arbitrary fixed smoothness $\alpha\in \mathbb{N}$, $\alpha\geq 2$.
In a recent paper by the authors, it was proved that randomly-digitally-shifted order $2\alpha$ digital nets in prime base $b$ achieve the best possible rate of convergence of the root mean square worst-case error of order $N^{-\alpha}(\log N)^{(s-1)/2}$ for $N=b^m$, where $N$ and $s$ denote the number of points and the dimension, respectively, which implies the existence of an optimal order QMC rule.
More recently, the authors provided an explicit construction of such an optimal order QMC rule by using Chen-Skriganov's digital nets in conjunction with Dick's digit interlacing composition.
These results were for fixed number of points.
In this paper we give a more general result on an explicit construction of optimal order QMC rules for arbitrary fixed smoothness $\alpha\in \mathbb{N}$ including the endpoint case $\alpha=1$.
That is, we prove that the projection of any infinite-dimensional order $2\alpha +1$ digital sequence in prime base $b$ onto the first $s$ coordinates achieves the best possible rate of convergence of the worst-case error of order $N^{-\alpha}(\log N)^{(s-1)/2}$ for $N=b^m$.
The explicit construction presented in this paper is not only easy to implement but also extensible in both $N$ and $s$.
\end{abstract}
Keywords: Quasi-Monte Carlo, Numerical integration, Higher order digital sequences, Sobolev space\\
MSC classifications: Primary, 41A55, 65D32; Secondary, 42C10, 65C05, 65D30

%%%%%%%%%%%%%%%%%%%%%%%%%%%%%%%%%%%%%%%%%%%%%%%%%%%%%%%%%%% Section 1
%%%%%%%%%%%%%%%%%%%%%%%%%%%%%%%%%%%%%%%%%%%%%%%%%%%%%%%%%%%
%%%%%%%%%%%%%%%%%%%%%%%%%%%%%%%%%%%%%%%%%%%%%%%%%%%%%%%%%%%
\section{Introduction and statement of the main result}
In this paper we study numerical integration of smooth functions defined over the $s$-dimensional unit cube.
Let $f\colon [0,1)^s\to \RR$ be an integrable function.
We denote the true integral of $f$ by
\begin{align*}
I(f) := \int_{[0,1)^s}f(\bsx)\rd \bsx.
\end{align*}
A quasi-Monte Carlo (QMC) rule is an equal-weight quadrature rule where the weights sum up to 1. That is, by using an $N$-element point set $P_{N}^{(s)}=\{\bsx_{n}^{(s)}\colon 0\leq n<N\}\subset [0,1)^s$, a QMC rule approximates $I(f)$ by
\begin{align*}
I(f; P_{N}^{(s)}) = \frac{1}{N}\sum_{n=0}^{N-1}f(\bsx_{n}^{(s)}).
\end{align*}
In case of an infinite sequence of points $\Scal^{(s)}=\{\bsx_n^{(s)}\colon n\geq 0\}\subset [0,1)^s$, we use the first $N$ elements of $\Scal^{(s)}$ as $P_{N}^{(s)}$.
Moreover, in case of an infinite-dimensional sequence of points $\Scal=\{\bsx_n\colon n\geq 0\}\subset [0,1)^{\NN}$, we use the first $N$ points of the projection of $\Scal$ on to the first $s$ coordinates as $P_{N}^{(s)}$.
An explicit construction of a good infinite-dimensional sequence of points is of considerable importance in practical applications since then the point set is extensible in both $N$ and $s$, i.e., both the number of points and the dimension can be increased while retaining the existing points \cite{Hic03}.

In order to measure the quality of a QMC point set or sequence for a class of integrands instead of a single integrand, we consider the so-called worst-case error for a normed function space which is defined as follows.
Let $V$ be a function space with norm $\|\cdot\|_V$.
The worst-case error of a point set $P_{N}^{(s)}$ in $V$, denoted by $e^{\wor}(V,P_{N}^{(s)})$, is the supremum of the quadrature error in the unit ball of $V$, i.e.,
\begin{align*}
e^{\wor}(V,P_{N}^{(s)}) := \sup_{\substack{f\in V\\ \|f\|_V \leq 1}}|I(f;P_{N}^{(s)})-I(f)|.
\end{align*}
Reproducing kernel Hilbert spaces of Sobolev type consisting of functions which have square integrable partial mixed derivatives up to order 1 in each variable have been frequently studied as an important example of a normed function space in the literature, see for instance \cite{Hic98} and \cite[Appendix~A]{NWbook}.
The worst-case error in such function spaces is often connected with geometric discrepancy of point sets, which measures how uniformly distributed the point set is.
This fact has motivated to construct low-discrepancy point sets and sequences.
Indeed there are many known explicit constructions of (possibly infinite-dimensional) low-discrepancy point sets and sequences including those of Halton \cite{Hal60}, Sobol' \cite{Sob67}, Faure \cite{Fau82}, Niederreiter \cite{Nie88}, Niederreiter and Xing \cite{NXbook}, Chen and Skriganov \cite{CS02} and Dick and Pillichshammer \cite{DP14} to list just a few.

Recently, function spaces with higher smoothness $\alpha\in \NN$, $\alpha \geq 2$, have attracted considerable attention in the area of uncertainty quantification, in particular partial differential equations with random coefficients, see for instance \cite{DKLNS14,DLS16}.
For such smooth functions it is possible to achieve higher order of convergence of the worst-case error by exploiting the smoothness of the functions.
In his seminal work \cite{Dic07,Dic08}, Dick studied the decay of the Walsh coefficients of smooth functions and introduced a digit interlacing composition for constructing the so-called (possibly infinite-dimensional) higher order digital nets and sequences which achieve the almost optimal rate of convergence $N^{-\alpha}(\log N)^{c(s,\alpha)}$ for $N=b^m$ and some $c(s,\alpha)>0$, see \cite[Theorem~5.4 \& Corollary~5.5]{Dic08} and also \cite[Theorem~22]{BD09}.
(We refer to Subsection~\ref{subsec:ho_digital_net} for the definition of higher order digital nets and sequences.)
Although this order of convergence is best possible up to some power of a $\log N$ factor, it has been unknown until recently whether the exponent $c(s,\alpha)$ can be improved to be optimal.

There has been some progress reported on the optimality of the exponent $c(s,\alpha)$ for higher order digital nets.
In \cite{HMOT15}, Hinrichs et al.\ considered periodic Sobolev spaces and periodic Nikol'skij-Besov spaces with (real-valued) dominating mixed smoothness up to 2, and obtained $c(s,\alpha)=(s-1)/2$ for order 2 digital nets, which is best possible.
In \cite{GSY1}, the authors of this paper considered a reproducing kernel Hilbert space $H_{\alpha,s}$ of Sobolev type consisting of non-periodic functions with smoothness $\alpha\in \NN$, $\alpha \geq 2$, and proved that randomly-digitally-shifted order $2\alpha$ digital nets can achieve the rate of convergence of the root mean square worst-case error of order $N^{-\alpha}(\log N)^{(s-1)/2}$.
(We refer to Subsection~\ref{subsec:sobolev} for the definition of $H_{\alpha,s}$.)
This result directly implies the existence of a QMC rule achieving $c(s,\alpha)=(s-1)/2$, which is best possible \cite[Proposition~1.3]{GSY1}.
More recently in \cite{GSY2}, the authors of this paper provided an explicit construction of such an optimal order QMC rule by using Chen-Skriganov's digital nets in conjunction with Dick's digit interlacing composition.

In this third paper of the authors on the optimality of higher order digital nets and sequences, we prove the following theorem:
\begin{theorem}\label{thm:main}
Let $b$ be a prime, and $s,\alpha\in \NN$.
Let $\Scal$ be an infinite-dimensional order $2\alpha+1$ digital sequence over the finite field $\FF_b$, and let $P_{N}^{(s)}$ be the set of the first $N$ points of the projection of $\Scal$ onto the first $s$ coordinates for $N\in \NN$.
Then for any $m\in \NN$, we have
\begin{align*}
e^{\wor}(H_{\alpha,s}; P_{b^m}^{(s)}) \leq C_{\alpha,b,s}\frac{m^{(s-1)/2}}{b^{\alpha m}},
\end{align*}
where $C_{\alpha,b,s}$ is positive and independent of $m$.
\end{theorem}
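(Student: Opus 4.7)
The plan is to adapt the strategy developed in \cite{GSY2} for fixed-dimensional order $2\alpha$ digital nets to the extensible sequence setting. The extra order of propagation (the hypothesis says order $2\alpha+1$ instead of $2\alpha$) is exactly what should compensate for the unit-order loss that occurs when an infinite-dimensional digital sequence is truncated to a finite-dimensional, finite point set: for each $m$, the first $b^m$ points of the projection of $\Scal$ onto the first $s$ coordinates should form a digital net whose dual possesses the propagation properties characterising an order $2\alpha$ digital net in the Dick sense.

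First I would record the reproducing kernel/Walsh series representation of the worst-case error. Since $H_{\alpha,s}$ is a reproducing kernel Hilbert space whose kernel admits an explicit Walsh expansion (as established in the earlier papers of the series), a standard computation using the character property of digital nets yields
\begin{align*}
\left[e^{\wor}(H_{\alpha,s};P_{b^m}^{(s)})\right]^2 \;=\; \sum_{\bsk\in D^{*}\setminus\{\bszero\}} r_{\alpha,s}(\bsk),
\end{align*}
where $D^{*}$ denotes the dual net of $P_{b^m}^{(s)}$ and $r_{\alpha,s}(\bsk)$ is a positive weight whose size is governed, up to constants depending only on $\alpha$ and $b$, by $b^{-2\mu_\alpha(\bsk)}$, with $\mu_\alpha$ denoting Dick's $\alpha$-weight.

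Next I would analyse the generating matrices of the truncation. Denoting by $C_1,C_2,\ldots$ the infinite generating matrices of $\Scal$ over $\FF_b$, the net $P_{b^m}^{(s)}$ is generated by the upper-left $m\times\infty$ blocks of $C_1,\ldots,C_s$. A short linear-algebra argument, using the order $2\alpha+1$ hypothesis on $\Scal$, should show that every nonzero $\bsk\in D^{*}$ satisfies $\mu_{2\alpha}(\bsk)\geq m+1$, which is precisely the defining condition for $P_{b^m}^{(s)}$ to be an order $2\alpha$ digital net in the sense of Dick. This reduces the estimation of the tail sum above to a counting problem of exactly the type solved in \cite{GSY2}.

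Finally I would invoke the Chen--Skriganov-style combinatorial counting of \cite{GSY2} to bound $\sum_{\bsk\in D^{*}\setminus\{\bszero\}} r_{\alpha,s}(\bsk)$ by $C_{\alpha,b,s}^{2}\,m^{s-1}/b^{2\alpha m}$; taking square roots gives the claim. The main obstacle I anticipate is the order-reduction step: although essentially standard for digital sequences, the bound $\mu_{2\alpha}(\bsk)\geq m+1$ for the dual of a truncated projection must be verified carefully because the truncation acts simultaneously in two directions (rows of the generating matrices and coordinates of $\Scal$), and one must ensure that no implicit dependence on $m$ or on $\Scal$ enters the subsequent counting bound, and in particular that the coordinate-symmetry required to obtain the sharp squared-error factor $m^{s-1}$ rather than the naive $m^{2(s-1)}$ is preserved after truncation. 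Once that is in place, the remainder of the argument is a straightforward application of the technology of \cite{GSY2}.
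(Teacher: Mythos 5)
Your proposal has two genuine gaps, and the second reveals that the route you sketch cannot be completed as stated. First, the error representation you start from is wrong for this (non-periodic) space: for $H_{\alpha,s}$ the squared worst-case error of a digital net is the \emph{double} sum $\sum_{\bsk,\bsl\in P^{\perp}\setminus\{\bszero\}}\hat{K}_{\alpha,s}(\bsk,\bsl)$ over pairs of dual-net points (Baldeaux--Dick), not a single sum $\sum_{\bsk\in P^{\perp}\setminus\{\bszero\}}r_{\alpha,s}(\bsk)$ with $r_{\alpha,s}(\bsk)\asymp b^{-2\mu_{\alpha}(\bsk)}$. The diagonal formula you write is precisely what one gets for the \emph{root mean square} error after a random digital shift (the setting of \cite{GSY1}); without randomization the off-diagonal Walsh coefficients $\hat{K}_{\alpha}(k,l)$, $k\neq l$, are exactly the difficulty. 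The paper controls them by combining the decay bound $|\hat{K}_{\alpha}(k,l)|\le D_{\alpha,b}b^{-\mu_{\alpha}(k)-\mu_{\alpha}(l)}$ with a sparsity statement (Proposition~\ref{prop:sparse_walsh}: $\hat{K}_{\alpha}(k,l)=0$ unless $(k,l)$ is of type $(p,q)$ with $p+q\le 2\alpha$, i.e.\ unless $k$ and $l$ share a long common tail of digits) and with the interpolation inequality $\mu_{\alpha}(\bsk)\ge \frac{\alpha-1}{2\alpha}\mu_{2\alpha+1}(\bsk)+\frac{\alpha+1}{2\alpha}\mu_{1}(\bsk)$, and then counts the surviving pairs with $\mu_1(\bsk)+\mu_1(\bsl)=z$ (Lemmas~\ref{lem:bound_Ralpha}--\ref{lem:bound_Jalpha}). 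Nothing in your outline replaces this step.

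Second, your reading of the hypothesis ``order $2\alpha+1$'' is off. There is no unit-order loss from truncation to compensate: by Definition~\ref{def:ho_digital_seq} (and Remark~\ref{rem:t-value}) the first $b^m$ points of the projection onto the first $s$ coordinates are already an order $2\alpha+1$ digital $(t,m,s)$-net with $t$ independent of $m$, so the careful linear-algebra verification you anticipate is built into the definitions. The extra ``$+1$'' is instead what makes the coefficient $B=(\alpha+1)/(2\alpha)$ of $\mu_1$ in the interpolation inequality strictly larger than $1/2$, so that the geometric decay $b^{-Bz}$ beats the growth $b^{(z-2\rho_1(P))/2}$ of the number of admissible pairs in the counting lemma; it also covers the endpoint $\alpha=1$. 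Relatedly, reducing to ``order $2\alpha$'' and then invoking \cite{GSY2} does not work: the counting argument there is not a black box for arbitrary order $2\alpha$ nets but exploits specific properties of the interlaced Chen--Skriganov construction (and excludes $\alpha=1$); indeed the paper states that whether order $2\alpha+1$ can be lowered for general sequences is open. So the core of the proof --- the sparsity of $\hat{K}_{\alpha}$, the $\mu_{\alpha}\ge A\mu_{2\alpha+1}+B\mu_1$ splitting, and the bound $|J_{\alpha}(z)|\ll (z-2\rho_1(P))^{2s\alpha+1}z^{s-1}b^{(z-2\rho_1(P))/2}$ --- is missing from your proposal and cannot be imported wholesale from \cite{GSY2}.
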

\noindent
Theorem~\ref{thm:main} claims that the projection of any infinite-dimensional order $2\alpha +1$ digital sequence onto the first $s$ coordinates achieves the best possible rate of convergence $N^{-\alpha}(\log N)^{(s-1)/2}$ of the worst-case error in $H_{\alpha,s}$, when $N$ is of the form $b^m$ for some $m\in \NN$ with a fixed prime base $b$.
Thus our result implies a new explicit construction of optimal order QMC rules for smooth integrands, which is not only easy to implement but also extensible in both $N$ and $s$.
As shown in \cite{HKKN12,Owe16}, in order for an extensible QMC rule to achieve a rate of convergence better than $N^{-1}$, the number of points must grow at least geometrically.
Therefore, our result is also best possible in this regard when $\alpha \geq 2$.

We would remark that our present result covers the endpoint case $\alpha=1$, which has not been included in the previous results \cite{GSY1,GSY2}.
Furthermore, since any order $2\alpha +1$ digital sequence is also an order $2\alpha' +1$ digital sequence for all $\alpha' < \alpha$ due to the so-called propagation rules \cite{BDP11}, it also achieves the best possible rate of convergence $N^{-\alpha'}(\log N)^{(s-1)/2}$ of the worst-case error in $H_{\alpha',s}$ as long as $\alpha' < \alpha$.
The main idea of our proof of Theorem~\ref{thm:main} is to exploit the decay and the sparsity of the Walsh coefficients of the reproducing kernel simultaneously in a different way from \cite{GSY2}.
A similar approach is used in \cite{DP14} to prove the best possible order of the $L_2$ discrepancy bound for higher order digital sequences over $\FF_2$.
Whether it is possible to lower the necessary order of digital sequences from $2\alpha+1$ while still achieving the best possible rate of convergence is an open question.

We note that the optimal order of convergence in the Sobolev space with smoothness $\alpha$ is also achieved by using the Frolov lattice rule in conjunction with a periodization technique \cite{Fro76,NUU15,Ull14,UU15}.
In fact, it has been shown that the Frolov lattice rule achieves the optimal order of convergence in Besov and Triebel-Lizorkin spaces with dominating mixed smoothness, and the optimality in the Sobolev space is obtained from the fact that the Sobolev space coincides with the Triebel-Lizorkin space under the special choice of the parameters.
Although the Frolov lattice rule is an equal-weight quadrature rule, the weights do not sum up to 1 in general, so that it is not a QMC rule.
Periodizing an integrand can be interpreted as having a modified quadrature rule with possibly unequal weights of the original integrand \cite{Hic02}, and is shown to cause a severe consequence especially in high dimensions even for the constant functions \cite{KSW07}.
Since our optimal order QMC rule does not require such a periodization, a numerical stability is kept independently of the dimension.
Nevertheless, a significant advantage of the Frolov lattice rule lies in its universality, that is, one quadrature rule satisfies the optimal order worst-case error bounds for many function spaces (such as Besov, Triebel-Lizorkin, and Sobolev) with different smoothness.
As mentioned above, our presented optimal order QMC rules also possess a kind of universality, which is though weaker than that of the Frolov lattice rule.
On the other hand, the Frolov lattice rule is not extensible in either $N$ or $s$ and also seems not easy to implement in high dimensions.
Finally we would point out that both of the quadrature rules have randomized and unbiased variants \cite{Dic11,KN16}.

%%%%%%%%%%%%%%%%%%%%%%%%%%%%%%%%%%%%%%%%%%%%%%%%%%%%%%%%%%% Section 2
%%%%%%%%%%%%%%%%%%%%%%%%%%%%%%%%%%%%%%%%%%%%%%%%%%%%%%%%%%%
%%%%%%%%%%%%%%%%%%%%%%%%%%%%%%%%%%%%%%%%%%%%%%%%%%%%%%%%%%%
\section{Preliminaries}\label{sec:pre}
Throughout this paper we shall use the following notations.
Let $\NN$ be the set of positive integers and $\NN_0=\NN\cup \{0\}$.
Let $b$ be a fixed prime, and $\FF_b$ the finite field with $b$ elements, which we identify with the set $\{0,1,\ldots,b-1\}$ equipped with addition and multiplication modulo $b$.
The operators $\oplus$ and $\ominus$ denote digitwise addition and subtraction modulo $b$, respectively, that is, for $k=\sum_{i=1}^{\infty}\kappa_ib^{i-1}\in \NN_0$ and $k'=\sum_{i=1}^{\infty}\kappa'_ib^{i-1}\in \NN_0$ with $\kappa_i,\kappa'_i\in \FF_b$, where all, except a finite number of $\kappa_i$ and $\kappa'_i$, are 0, we define
\begin{align*}
k\oplus k' := \sum_{i=1}^{\infty}\lambda_ib^{i-1}\quad \text{and}\quad k\ominus k' := \sum_{i=1}^{\infty}\lambda'_ib^{i-1},
\end{align*}
where $\lambda_i=\kappa_i+\kappa'_i \pmod b$ and $\lambda'_i=\kappa_i-\kappa'_i \pmod b$.
In case of vectors in $\NN_0^s$, the operators $\oplus$ and $\ominus$ are applied componentwise.

%%%%%%%%%%%%%%%%%%%%%%%%%%%%%%%%%%%%%%%%%%%%%%%%%%%%%%%%%%% Section 2.1
%%%%%%%%%%%%%%%%%%%%%%%%%%%%%%%%%%%%%%%%%%%%%%%%%%%%%%%%%%%
\subsection{Higher order digital nets and sequences}\label{subsec:ho_digital_net}

%%%%%%%%%%%%%%%%%%%%%%%%%%%%%%%%%%%%%%%%%%%%%%%%%%%%%%%%%%% Section 2.1.1
\subsubsection{Definitions}
We start with a general digital construction scheme of infinite-dimensional finite point sets in the unit cube due to Niederreiter \cite[p.~63]{Nbook}.
\begin{definition}\label{def:digital_net}
For $m,n\in \NN$, let $C_j\in \FF_b^{n\times m}$ for $j\in \NN$ be $n\times m$ matrices over $\FF_b$.
For an integer $0\le h<b^m$, we denote the $b$-adic expansion of $h$ by $h=\sum_{i=1}^{m}\eta_i b^{i-1}$.
For $j\in \NN$, let
\begin{align*}
 x_{h,j} = \frac{\xi_{1,h,j}}{b}+\frac{\xi_{2,h,j}}{b^2}+\cdots + \frac{\xi_{n,h,j}}{b^n} ,
\end{align*}
where $\xi_{1,h,j},\xi_{2,h,j},\ldots,\xi_{n,h,j}$ are given by
\begin{align*}
 (\xi_{1,h,j},\xi_{2,h,j},\ldots,\xi_{n,h,j})^{\top} = C_j (\eta_1,\eta_2,\ldots,\eta_m)^{\top}.
\end{align*}
Then the set $P_{b^m}=\{\bsx_0,\bsx_1,\ldots,\bsx_{b^m-1}\}$ with $\bsx_h=(x_{h,1},x_{h,2},\ldots)\in [0,1)^{\NN}$ is called an infinite-dimensional digital net over $\FF_b$ with generating matrices $C_j$.
Moreover, the projection of $P_{b^m}$ onto the first $s$ coordinates is called a digital net over $\FF_b$ with generating matrices $C_1,\ldots,C_s$.
\end{definition}
\noindent
This construction scheme can be extended to the case of an infinite-dimensional sequence of points \cite[p.~72]{Nbook}.
\begin{definition}\label{def:digital_seq}
Let $C_j\in \FF_b^{\NN\times \NN}$ for $j\in \NN$ be $\NN\times \NN$ matrices over $\FF_b$.
For each $C_j=(c_{k,l}^{(j)})_{k,l\in \NN}$ we assume that there exists a function $K: \NN\to \NN$ such that $c_{k,l}^{(j)}=0$ when $k>K(l)$.
For a non-negative integer $h$, we denote the $b$-adic expansion of $h$ by $h=\sum_{i=1}^{a}\eta_i b^{i-1}$ for some $a\in \NN$.
For $j\in \NN$, let
\begin{align*}
 x_{h,j} = \frac{\xi_{1,h,j}}{b}+\frac{\xi_{2,h,j}}{b^2}+\cdots ,
\end{align*}
where $\xi_{1,h,j},\xi_{2,h,j},\ldots$ are given by
\begin{align*}
 (\xi_{1,h,j},\xi_{2,h,j},\ldots)^{\top} = C_j (\eta_1,\eta_2,\ldots,\eta_a,0,0,\ldots)^{\top}.
\end{align*}
Then the sequence $\Scal=(\bsx_0,\bsx_1,\ldots)$ with $\bsx_h=(x_{h,1},x_{h,2},\ldots)\in [0,1)^{\NN}$ is called an infinite-dimensional digital sequence over $\FF_b$ with generating matrices $C_j$.
Moreover, the projection of $\Scal$ onto the first $s$ coordinates is called a digital sequence over $\FF_b$ with generating matrices $C_1,\ldots,C_s$.
\end{definition}
\noindent 
Note that the condition $c_{k,l}^{(j)}=0$ for all sufficiently large $k$ is a standard assumption to ensure that every number $x_{h,j}$ is $b$-adic rational, i.e., $x_{h,j}$ is written in a finite $b$-adic expansion, so that every point in $\Scal$ belongs to $[0,1)^{\NN}$, see for instance \cite{DP14,Nbook}.

Now we give the definitions of higher order digital nets and sequences, which are special cases of \cite[Definition~4.3]{Dic08} and \cite[Definition~4.8]{Dic08}, respectively.
\begin{definition}\label{def:ho_digital_net}
For $m,n,s,\alpha\in \NN$ with $n\geq \alpha m$, let $P_{b^m}^{(s)}$ be a digital net over $\FF_b$ with generating matrices $C_1,\ldots,C_s\in \FF_b^{n\times m}$.
For $1\leq i\leq n$ and $1\leq j\leq s$, we denote the $i$-th row of $C_j$ by $\bsc_{i,j}$.
Let $t$ be an integer with $0\leq t\leq \alpha m$ which satisfies the following condition:
For all $1\leq i(j,v_j)<\cdots < i(j,1)\leq n$ with
\begin{align*}
 \sum_{j=1}^{s} \sum_{l=1}^{\min(\alpha, v_j)}i(j,l)\leq \alpha m -t,
\end{align*}
the vectors $\bsc_{i(1,v_1),1},\ldots,\bsc_{i(1,1),1}, \ldots, \bsc_{i(s,v_s),s},\ldots,\bsc_{i(s,1),s}$ are linearly independent over $\FF_b$. Then we call $P_{b^m}^{(s)}$ an order $\alpha$ digital $(t,m,s)$-net over $\FF_b$.
\end{definition}

\begin{definition}\label{def:ho_digital_seq}
Let $\Scal$ be an infinite-dimensional digital sequence over $\FF_b$ with generating matrices $C_j\in \FF_b^{\NN\times \NN}$.
For $s\in \NN$, $\Scal^{(s)}$ denotes the projection of $\Scal$ onto the first $s$ coordinates, i.e., the digital sequence over $\FF_b$ with generating matrices $C_1,\ldots,C_s$.
Let $t(s)$ be a non-negative integer.
If for any $m\in \NN$ with $\alpha m\geq t(s)$ the set of the first $b^m$ points of $\Scal^{(s)}$ is an order $\alpha$ digital $(t(s),m,s)$-net over $\FF_b$, then we call $\Scal^{(s)}$ an order $\alpha$ digital $(t(s),s)$-sequence over $\FF_b$.

Moreover, if there exists a function $t\colon \NN\to \NN$ such that $\Scal^{(s)}$ is an order $\alpha$ digital $(t(s),s)$-sequence over $\FF_b$ for any $s\in \NN$, then we call $\Scal$ an infinite-dimensional order $\alpha$ digital sequence over $\FF_b$.
\end{definition}

\begin{remark}\label{rem:t-value}
It is clear from Definition~\ref{def:ho_digital_net} that any digital net can be regarded as an order $\alpha$ digital $(t,m,s)$-net with $t=\alpha m$ since then there is no linear independence condition imposed on the rows of the generating matrices.
Once a digital sequence is considered, however, it follows from Definition~\ref{def:ho_digital_seq} that the value of $t(s)$ is supposed to be independent of $m$. Since we are interested in the convergence behavior of the worst-case error as a function of $m$ (instead of a fixed $m$), we allow the situation $\alpha m <t(s)$ and think of $P_{b^1}^{(s)},P_{b^2}^{(s)},\ldots$ as a sequence of order $\alpha$ digital $(t(s),m,s)$-nets with a fixed $t(s)$.
\end{remark}

%%%%%%%%%%%%%%%%%%%%%%%%%%%%%%%%%%%%%%%%%%%%%%%%%%%%%%%%%%% Section 2.1.2
\subsubsection{Explicit construction}
Explicit constructions of infinite-dimensional order 1 digital sequences over $\FF_b$ have been given by Sobol' \cite{Sob67}, Niederreiter \cite{Nie88}, Tezuka \cite{Tez93}, Niederreiter and Xing \cite{NXbook} and others.
For instance, the Niederreiter sequence introduced in \cite{Nie88}, which was generalized thereafter in \cite{Tez93}, is constructed as follows.
Let $p_1,p_2,\ldots \in \FF_b[x]$ be distinct monic irreducible polynomials over $\FF_b$.
For each $j\in \NN$, let $e_j=\deg(p_j)$ and consider the following Laurent series expansion
\begin{align*}
 \frac{x^{e_j-z-1}}{p_j(x)^i}=\sum_{l=1}^{\infty}a^{(j)}(i,z,l)x^{-l} \in \FF_b((x^{-1})),
\end{align*}
for integers $i\geq 1$ and $0\leq z<e_j$. Then define the matrix $C_j=(c^{(j)}_{k,l})_{k,l\in \NN}$ by
\begin{align*}
 c^{(j)}_{k,l}=a^{(j)}\left(\left\lfloor \frac{k-1}{e_j} \right\rfloor +1 , (k-1)\bmod e_j,l\right) .
\end{align*}
Note that we have $c^{(j)}_{k,l}=0$ whenever $k>l$.
It is known that these matrices $C_j$ generate an infinite-dimensional order 1 digital sequence over $\FF_b$ whose function $t:\NN\to \NN$ is given by
\begin{align*}
 t(s) = \sum_{j=1}^{s}\left( e_j-1\right) ,
\end{align*}
for any $s\in \NN$.
We refer to \cite[Section~8]{DPbook} for more information on these special constructions of infinite-dimensional order 1 digital sequences.
In what follows, we introduce the digit interlacing composition due to Dick \cite{Dic07,Dic08}, which enables to explicitly construct infinite-dimensional order $\alpha$ digital sequences over $\FF_b$ for a given $\alpha\in \NN$ by using infinite-dimensional order 1 digital sequences over $\FF_b$.

\begin{definition}\label{def:interlacing}
For $\alpha\in \NN$, let $\bsx=(x_1,\ldots,x_{\alpha})\in [0,1)^{\alpha}$.
For $1\leq j\leq \alpha$, we denote the $b$-adic expansion of $x_j$ by $x_j=\xi_{1,j}/b+\xi_{2,j}/b^2+\cdots$ with $\xi_{i,j}\in \FF_b$, which is understood to be unique in the sense that infinitely many of the $\xi_{i,j}$'s are different from $b-1$.
Then we define the map $\Dcal_{\alpha}: [0,1)^{\alpha}\to [0,1)$ by
\begin{align*}
 \Dcal_{\alpha}(x_1,\ldots,x_{\alpha}) := \sum_{i=1}^{\infty}\sum_{j=1}^{\alpha}\frac{\xi_{i,j}}{b^{\alpha(i-1) +j}} .
\end{align*}
In case of an infinite-dimensional point $\bsx=(x_1,x_2,\ldots)\in [0,1)^{\NN}$, we apply $\Dcal_{\alpha}$ to every non-overlapping consecutive $\alpha$ components of $\bsx$, i.e.,
\begin{align*}
 \Dcal_{\alpha}(x_1,x_2,\ldots) := \left( \Dcal_{\alpha}(x_1,\ldots,x_{\alpha}),\Dcal_{\alpha}(x_{\alpha +1},\ldots,x_{2\alpha}),\ldots\right) \in [0,1)^{\NN}.
\end{align*}
\end{definition}

By using infinite-dimensional order $1$ digital sequences and $\Dcal_{\alpha}$, we can construct infinite-dimensional order $\alpha$ digital sequences as follows \cite[Theorems~4.11 and 4.12]{Dic08}.
\begin{lemma}\label{lem:interlacing}
Let $\Scal=\{\bsx_n\colon n\geq 0\} \subset [0,1)^{\NN}$ be an infinite-dimensional order 1 digital sequence over $\FF_b$ with $t=t_1\colon \NN\to \NN$.
For $\alpha\in \NN$, define
\begin{align*}
 \Dcal_{\alpha}(\Scal) := \{\Dcal_{\alpha}(\bsx_n)\colon n\geq 0\} \subset [0,1)^{\NN}.
\end{align*}
Then $\Dcal_{\alpha}(\Scal)$ is an infinite-dimensional order $\alpha$ digital sequence over $\FF_b$ with $t=t_{\alpha}\colon \NN\to \NN$, where $t_{\alpha}$ is given by
\begin{align*}
 t_{\alpha}(s)= \alpha t_1(\alpha s)+\frac{s\alpha(\alpha-1)}{2},
\end{align*}
for any $s\in \NN$.
\end{lemma}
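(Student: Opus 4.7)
The plan is to verify the defining property of an infinite-dimensional order $\alpha$ digital sequence for $\Dcal_\alpha(\Scal)$ by first identifying its generating matrices in terms of those of $\Scal$, and then translating the linear-independence condition of Definition~\ref{def:ho_digital_net} for the interlaced matrices back to the (known) order 1 condition on the original matrices. Using the defining formula $\Dcal_\alpha(x_1,\ldots,x_\alpha)=\sum_{i,j}\xi_{i,j}/b^{\alpha(i-1)+j}$, I would show that the generating matrices of $\Dcal_\alpha(\Scal)$ are $\tilde C_1,\tilde C_2,\ldots$ where the row of $\tilde C_l$ at position $k=\alpha(i-1)+j$ (with $1\le j\le\alpha$) equals the $i$-th row of $C_{(l-1)\alpha+j}$; in particular the first $s$ coordinates of $\Dcal_\alpha(\Scal)$ are built from $C_1,\ldots,C_{\alpha s}$. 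The eventually-zero-column condition of Definition~\ref{def:digital_seq} for $\tilde C_l$ follows routinely from the corresponding condition on the $C_j$'s.

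Next, fix $s$ and $m$ with $\alpha m\ge t_\alpha(s)$ and consider indices $1\le i(l,v_l)<\cdots<i(l,1)$ of rows of $\tilde C_l$ (for $l=1,\ldots,s$) satisfying $\sum_{l=1}^s\sum_{w=1}^{\min(\alpha,v_l)}i(l,w)\le\alpha m-t_\alpha(s)$. I would uniquely write $i(l,w)=(i^*(l,w)-1)\alpha+j^*(l,w)$ with $1\le j^*(l,w)\le\alpha$, so that the selected row of $\tilde C_l$ is literally the $i^*(l,w)$-th row of $C_{(l-1)\alpha+j^*(l,w)}$, and for each pair $(l,j)$ I would set $d_{l,j}=\max\{i^*(l,w):j^*(l,w)=j\}$ (with $d_{l,j}=0$ if the set is empty). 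The crux of the argument is the combinatorial inequality $\sum_{l,j}d_{l,j}\le m-t_1(\alpha s)$: for each $l$, picking $w_j^*$ attaining the maximum for each $j$ with $d_{l,j}>0$ yields $\alpha d_{l,j}=i(l,w_j^*)+\alpha-j$, and since the $w_j^*$ are distinct and at most $\min(\alpha,v_l)$ in number one has $\sum_j i(l,w_j^*)\le\sum_{w=1}^{\min(\alpha,v_l)}i(l,w)$ while $\sum_j(\alpha-j)\le\alpha(\alpha-1)/2$. Summing over $l$ and invoking the weight hypothesis together with the formula for $t_\alpha(s)$ produces $\alpha\sum_{l,j}d_{l,j}\le\alpha(m-t_1(\alpha s))$ as desired.

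Finally, since $\Scal$ is an infinite-dimensional order 1 digital sequence, its projection onto the first $\alpha s$ coordinates is an order 1 digital $(t_1(\alpha s),\alpha s)$-sequence, so together the first $d_{l,j}$ rows of each $C_{(l-1)\alpha+j}$ (taken jointly over $l\le s$ and $j\le\alpha$) are linearly independent over $\FF_b$, and a fortiori so is the selected subset of rows. Via the interlacing correspondence this is precisely the linear-independence condition required by Definition~\ref{def:ho_digital_net}, so the first $b^m$ points of $\Dcal_\alpha(\Scal)^{(s)}$ form an order $\alpha$ digital $(t_\alpha(s),m,s)$-net; since this holds for every $s$ and every qualifying $m$, the lemma will follow. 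The main obstacle is the combinatorial step: a careful bookkeeping argument must convert each interlaced-row index $i$ to the original-row index $\lceil i/\alpha\rceil$, controlling the per-coordinate loss by exactly $\alpha(\alpha-1)/2$ so as to reproduce the claimed $t_\alpha(s)=\alpha t_1(\alpha s)+s\alpha(\alpha-1)/2$.
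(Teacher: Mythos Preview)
The paper does not give its own proof of this lemma; it simply cites \cite[Theorems~4.11 and 4.12]{Dic08}. Your sketch is correct and is essentially the argument found there: the identification of the interlaced generating matrices coincides with the paper's Remark~\ref{rem:interlacing}, and your combinatorial step---writing $i(l,w)=(i^*(l,w)-1)\alpha+j^*(l,w)$, taking $d_{l,j}=\max\{i^*(l,w):j^*(l,w)=j\}$, and bounding $\alpha\sum_{l,j}d_{l,j}$ via $\sum_{j}(\alpha-j)\le\alpha(\alpha-1)/2$---is exactly how the loss of $s\alpha(\alpha-1)/2$ in $t_\alpha(s)$ arises in Dick's proof, after which the order~1 property of the first $b^m$ points of $\Scal^{(\alpha s)}$ (applicable since $\alpha m\ge t_\alpha(s)$ forces $m\ge t_1(\alpha s)$) yields the required linear independence.
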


\begin{remark}\label{rem:interlacing}
Let $\Scal$ be an infinite-dimensional digital sequence with generating matrices $C_j\in \FF_b^{\NN\times \NN}$.
For $i,j\in \NN$, let $\bsc_{i,j}$ denote the $i$-th row of $C_j$.
For each $j\in \NN$, define a matrix $D_j\in \FF_b^{\NN\times \NN}$, whose $i$-th row is denoted by $\bsd_{i,j}$, as
\begin{align*}
 \bsd_{\alpha (h-1)+i, j} = \bsc_{h, \alpha(j-1)+i},
\end{align*}
for $h\in \NN$ and $1 \leq i\leq \alpha$.
Then the infinite-dimensional digital sequence over $\FF_b$ with generating matrices $D_j\in \FF_b^{\NN\times \NN}$ is nothing but $\Dcal_{\alpha}(\Scal)$ defined in Lemma~\ref{lem:interlacing}.
\end{remark}

%%%%%%%%%%%%%%%%%%%%%%%%%%%%%%%%%%%%%%%%%%%%%%%%%%%%%%%%%%% Section 2.1.3
\subsubsection{Dual net and Dick metric functions}
The concept of dual net is of crucial importance in analyzing the worst-case error of a digital net.

\begin{definition}
For $m,n,s\in \NN$, let $P_{b^m}^{(s)}$ be a digital net with generating matrices $C_1,\ldots,C_s\in \FF_b^{n\times m}$.
Then the dual net of $P_{b^m}^{(s)}$, denoted by $P_{b^m}^{(s)\perp}$, is defined as
\begin{align*}
P_{b^m}^{(s)\perp} := \left\{ \bsk=(k_1,\ldots,k_s)\in \NN_0^s\colon C_1^{\top}\vec{k}_1\oplus \cdots \oplus C_s^{\top}\vec{k}_s = \bszero \in \FF_b^m \right\} ,
\end{align*}
where we set $\vec{k}=(\kappa_1,\ldots,\kappa_n)$ for $k\in \NN_0$ whose $b$-adic expansion is denoted by $k=\sum_{i=1}^{\infty}\kappa_ib^{i-1}$, where all except a finite number of $\kappa_i$ are 0.
\end{definition}

Further we recall the definition of the Dick metric function $\mu_{\alpha}$ for $\alpha\in \NN$ \cite{Dic08}.
Note that the special case where $\alpha=1$ was originally introduced in \cite{Nie86} and \cite{RT97}, and is called the NRT metric function.
\begin{definition}\label{def:Dick_metric_func}
Let $\alpha\in \NN$. For $k\in \NN$, we denote the $b$-adic expansion of $k$ by $k=\kappa_1b^{c_1-1}+\kappa_2b^{c_2-1}+\cdots+\kappa_vb^{c_v-1}$ such that $\kappa_1,\ldots,\kappa_v\in \{1,\ldots,b-1\}$ and $c_1>c_2>\cdots >c_v>0$. Then we define
\begin{align*}
 \mu_{\alpha}(k):=\sum_{i=1}^{\min(\alpha,v)}c_i ,
\end{align*}
and $\mu_{\alpha}(0):=0$. For $\bsk=(k_1,\ldots,k_s)\in \NN_0^s$, we define
\begin{align*}
 \mu_{\alpha}(\bsk):=\sum_{j=1}^{s}\mu_{\alpha}(k_j).
\end{align*}
\end{definition}
\noindent
Then the minimum Dick metric of a digital net is defined as follows.
\begin{definition}\label{def:min_Dick_metric}
For $m,n,s\in \NN$, let $P_{b^m}^{(s)}$ be a digital net over $\FF_b$ and $P_{b^m}^{(s)\perp}$ its dual net.
For $\alpha\in \NN$, the minimum Dick metric of $P_{b^m}^{(s)}$ is defined by
\begin{align*}
 \rho_{\alpha}(P_{b^m}^{(s)}) := \min_{\bsk\in P_{b^m}^{(s)\perp}\setminus \{\bszero\}}\mu_{\alpha}(\bsk).
\end{align*}
\end{definition}

%%%%%%%%%%%%%%%%%%%%%%%%%%%%%%%%%%%%%%%%%%%%%%%%%%%%%%%%%%% Section 2.1.4
\subsubsection{Some properties of higher order digital nets and sequences}
The following property of an order $\alpha$ digital $(t,m,s)$-net directly follows from the linear independence of the rows of generating matrices, see \cite[Chapter~15]{DPbook}.
\begin{lemma}\label{lem:min_Dick_metric}
For any order $\alpha$ digital $(t,m,s)$-net $P_{b^m}^{(s)}$ over $\FF_b$, we have
\begin{align*}
 \alpha m -t < \rho_{\alpha}(P_{b^m}^{(s)}) \leq \alpha m.
\end{align*}
\end{lemma}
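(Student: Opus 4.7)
The plan is to prove each of the two inequalities separately; since the lower bound $\alpha m - t < \rho_\alpha(P_{b^m}^{(s)})$ is the direction that feeds into the main theorem, I treat it in detail and only sketch the upper bound.

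For the lower bound I will argue by contradiction. Suppose there exists a non-zero $\bsk = (k_1, \ldots, k_s) \in P_{b^m}^{(s)\perp}$ with $\mu_\alpha(\bsk) \leq \alpha m - t$. For each coordinate $j$, let $i(j, 1) > i(j, 2) > \cdots > i(j, v_j) \geq 1$ be the positions of the non-zero base-$b$ digits of $k_j$, so that $k_j = \sum_{l=1}^{v_j} \kappa_{j, i(j, l)}\, b^{i(j, l) - 1}$ with $\kappa_{j, i(j, l)} \in \FF_b \setminus \{0\}$. By the definition of the Dick metric function, $\mu_\alpha(k_j) = \sum_{l = 1}^{\min(\alpha, v_j)} i(j, l)$, and summing over $j$ yields
\[
\sum_{j=1}^{s} \sum_{l=1}^{\min(\alpha, v_j)} i(j, l) \;=\; \mu_\alpha(\bsk) \;\leq\; \alpha m - t .
\]
Because $i(j, 1) \leq \mu_\alpha(k_j) \leq \alpha m \leq n$, every non-zero digit of every $k_j$ lies within the first $n$ positions and is therefore recorded in $\vec{k}_j$. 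Writing out the dual-net equation $C_1^\top \vec{k}_1 \oplus \cdots \oplus C_s^\top \vec{k}_s = \bszero$ row by row then produces the non-trivial $\FF_b$-linear relation
\[
\sum_{j=1}^{s} \sum_{l=1}^{v_j} \kappa_{j, i(j, l)}\, \bsc_{i(j, l), j} \;=\; \bszero \in \FF_b^m ,
\]
in which at least one coefficient is non-zero since $\bsk \neq \bszero$. However, the order-$\alpha$ digital $(t, m, s)$-net property applied to this very tuple of indices $\{i(j, l)\}$ asserts that the rows $\bsc_{i(j, l), j}$ are linearly independent over $\FF_b$, a contradiction. This establishes $\rho_\alpha(P_{b^m}^{(s)}) > \alpha m - t$.

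For the upper bound $\rho_\alpha(P_{b^m}^{(s)}) \leq \alpha m$, I will exhibit a non-zero $\bsk \in P_{b^m}^{(s)\perp}$ with $\mu_\alpha(\bsk) \leq \alpha m$ by an elementary linear-algebra argument: the rows of the generating matrices lie in the $m$-dimensional space $\FF_b^m$, so any sufficiently rich family of them admits a non-trivial $\FF_b$-linear relation, and one selects a relation whose underlying index positions contribute at most $\alpha m$ to the Dick metric; translating the coefficients back into a base-$b$ expansion produces the required dual-net element.

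The conceptual heart of the argument is the identification, in the first step, of the Dick-metric quantity $\mu_\alpha(\bsk)$ with the index sum $\sum_j \sum_{l=1}^{\min(\alpha, v_j)} i(j, l)$ appearing in the definition of an order-$\alpha$ digital net, together with the observation that $\mu_\alpha(\bsk) \leq n$ keeps every relevant digit inside the first $n$ rows of the generating matrices. Once this correspondence is in place, the contradiction is immediate, so the real content of the lemma is the careful bookkeeping between the Dick metric and the net-theoretic linear-independence condition.
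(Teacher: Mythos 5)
Your lower-bound argument is correct and complete, and it is essentially the argument the paper has in mind: the paper offers no proof of this lemma, saying only that it ``directly follows from the linear independence of the rows of generating matrices'' and citing the Dick--Pillichshammer book, and your contradiction argument is exactly that. You also handle the two points that need care: under the hypothesis $\mu_\alpha(\bsk)\leq\alpha m-t\leq n$ every nonzero digit of every $k_j$ lies among the first $n$ positions, so the dual-net identity really is the relation $\sum_{j}\sum_{l=1}^{v_j}\kappa_{j,i(j,l)}\,\bsc_{i(j,l),j}=\bszero$; and Definition~\ref{def:ho_digital_net} asserts independence of \emph{all} rows indexed by the nonzero digit positions, even though only the $\min(\alpha,v_j)$ largest positions enter the weight condition, which is precisely what contradicts that relation.

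The upper bound, however, is a genuine gap, and the sketch cannot be completed as written. The step ``one selects a relation whose underlying index positions contribute at most $\alpha m$ to the Dick metric'' is exactly the difficulty: a nontrivial relation among rows whose indices are confined to $\{1,\dots,m\}$ need not exist, since those rows can be linearly independent. Concretely, take $s=1$, $n=m$, $C_1=I_m$; this is an order $1$ digital $(0,m,1)$-net, its dual net (with the paper's truncation convention) consists of the nonnegative integers whose first $m$ digits all vanish, and hence $\rho_1(P_{b^m}^{(1)})=m+1>\alpha m$. So the claimed inequality $\rho_\alpha\leq\alpha m$ is not provable in this generality; what a dimension count does give (a dependency among rows with indices up to $m+1$, or the element $b^{n}$ itself when the truncation convention applies) is only a bound of the form $\rho_\alpha\leq\alpha m+c_\alpha$ with $c_\alpha$ independent of $m$, e.g.\ $\rho_1\leq m+1$. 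This weaker bound is all the paper actually needs: the upper half of the lemma enters only through the remark that ``$\rho_1(P)$ cannot be larger than $m$'' in the proof of Theorem~\ref{thm:main}, where $\rho_1(P)\leq m+1$ serves equally well up to constants. So your first part supplies the substance required downstream, but as a proof of the lemma as stated the upper-bound half is missing; you should either prove the corrected bound $\rho_\alpha\leq\alpha m+c_\alpha$ via the row-dependency argument or note the edge cases in which $\rho_\alpha\leq\alpha m$ fails.
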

\noindent
Moreover, the following lemma is known under the name of \emph{propagation rule}, which states that any order $\alpha$ digital net is also an order $\alpha'$ digital net as long as $1\le \alpha' <\alpha$.
The lemma again directly follows from the linear independence of the rows of the generating matrices.
We refer to \cite[Theorem~3.3]{Dic07} and \cite[Theorem~4.10]{Dic08} for the proof.
\begin{lemma}\label{lem:propagation}
For $\alpha\in \NN$, let $P_{b^m}^{(s)}$ be an order $\alpha$ digital $(t,m,s)$-net over $\FF_b$ with some integer $0\le t\le \alpha m$. Then, for any $\alpha' \in \NN$ with $1\le \alpha' <\alpha$, $P_{b^m}^{(s)}$ is also an order $\alpha'$ digital $(t_{\alpha'},m,s)$-net over $\FF_b$ with $t_{\alpha'}=\lceil t\alpha'/\alpha\rceil$.
\end{lemma}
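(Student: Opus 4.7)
The plan is to verify that any choice of strictly decreasing indices $i(j,v_j)<\cdots<i(j,1)$ satisfying the order $\alpha'$ sum constraint from Definition~\ref{def:ho_digital_net} automatically satisfies the order $\alpha$ sum constraint, whereupon the hypothesis on $P_{b^m}^{(s)}$ directly supplies the required linear independence. The entire argument reduces to an elementary arithmetic comparison between the two index sums
$$S_\alpha := \sum_{j=1}^s \sum_{l=1}^{\min(\alpha,v_j)} i(j,l) \qquad \text{and} \qquad S_{\alpha'} := \sum_{j=1}^s \sum_{l=1}^{\min(\alpha',v_j)} i(j,l),$$
the latter being obtained from the former by truncating each inner sum at the $\alpha'$-th term.

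First I would fix $j$ and prove the coordinatewise estimate $\alpha' \sum_{l=1}^{\min(\alpha,v_j)} i(j,l) \le \alpha \sum_{l=1}^{\min(\alpha',v_j)} i(j,l)$. When $v_j \le \alpha'$ the two inner sums coincide and the inequality reduces to $\alpha' \le \alpha$. When $v_j > \alpha'$, I split the left-hand sum at $l = \alpha'$ and bound the tail $\sum_{l=\alpha'+1}^{\min(\alpha,v_j)} i(j,l)$ by $(\alpha - \alpha')\,i(j,\alpha')$, using the pointwise bound $i(j,l) \le i(j,\alpha')$ for $l > \alpha'$ together with $\min(\alpha,v_j) - \alpha' \le \alpha - \alpha'$. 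The strict ordering of the indices then yields the averaged estimate $\alpha'\,i(j,\alpha') \le \sum_{l=1}^{\alpha'} i(j,l)$, and combining these two gives the required inequality.

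Summing over $j$ yields $\alpha' S_\alpha \le \alpha S_{\alpha'}$. If the order $\alpha'$ constraint holds, that is $S_{\alpha'} \le \alpha' m - t_{\alpha'}$, then dividing by $\alpha'$ gives $S_\alpha \le \alpha m - \alpha t_{\alpha'}/\alpha'$. The choice $t_{\alpha'} = \lceil t\alpha'/\alpha\rceil$ ensures $t_{\alpha'} \ge t\alpha'/\alpha$, whence $\alpha t_{\alpha'}/\alpha' \ge t$ and therefore $S_\alpha \le \alpha m - t$. Since $P_{b^m}^{(s)}$ is assumed to be an order $\alpha$ digital $(t,m,s)$-net, the vectors $\bsc_{i(j,l),j}$ attached to these indices are linearly independent over $\FF_b$, which is exactly the defining condition for $P_{b^m}^{(s)}$ to be an order $\alpha'$ digital $(t_{\alpha'},m,s)$-net.

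The main obstacle, such as it is, is the coordinatewise arithmetic inequality: one must use both a pointwise bound on the tail and an averaged lower bound on the head of the inner sum. Once this is in place the rest is essentially bookkeeping through Definition~\ref{def:ho_digital_net}. The use of the ceiling in $t_{\alpha'} = \lceil t\alpha'/\alpha\rceil$ is exactly what is needed to keep $t_{\alpha'}$ an integer while preserving the inequality $\alpha t_{\alpha'}/\alpha' \ge t$ that drives the last step.
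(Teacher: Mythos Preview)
Your argument is correct. The coordinatewise inequality $\alpha'\sum_{l=1}^{\min(\alpha,v_j)} i(j,l)\le \alpha\sum_{l=1}^{\min(\alpha',v_j)} i(j,l)$ is established exactly as you describe, and the passage from $S_{\alpha'}\le \alpha' m - t_{\alpha'}$ to $S_{\alpha}\le \alpha m - t$ via $t_{\alpha'}\ge t\alpha'/\alpha$ is clean. The paper itself does not prove this lemma but only refers to \cite[Theorem~3.3]{Dic07} and \cite[Theorem~4.10]{Dic08}; your write-up is essentially the standard argument found there, so there is nothing to compare.
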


%%%%%%%%%%%%%%%%%%%%%%%%%%%%%%%%%%%%%%%%%%%%%%%%%%%%%%%%%%% Section 2.2
%%%%%%%%%%%%%%%%%%%%%%%%%%%%%%%%%%%%%%%%%%%%%%%%%%%%%%%%%%%
\subsection{Sobolev spaces}\label{subsec:sobolev}
Here we introduce the function space which we consider in this paper.
First let us consider the one-dimensional case.
For $\alpha\in \NN$, the Sobolev space with smoothness $\alpha$ is given by
\begin{align*}
 H_{\alpha} & := \Big\{f \colon [0,1)\to \RR \mid \\
 & \qquad f^{(r)} \colon \text{absolutely continuous for $r=0,\ldots,\alpha-1$}, f^{(\alpha)}\in L^2([0,1))\Big\},
\end{align*}
where $f^{(r)}$ denotes the $r$-th derivative of $f$, with the inner product
\begin{align*}
 \langle f, g \rangle_{\alpha} = \sum_{r=0}^{\alpha-1}\int_{0}^{1}f^{(r)}(x)\, \rd x \int_{0}^{1}g^{(r)}(x)\, \rd x + \int_{0}^{1}f^{(\alpha)}(x)g^{(\alpha)}(x)\, \rd x,
\end{align*}
for $f,g\in H_{\alpha}$. The space $H_{\alpha}$ is also a reproducing kernel Hilbert space with the reproducing kernel
\begin{align*}
 K_{\alpha}(x,y) = \sum_{r=0}^{\alpha}\frac{B_r(x)B_r(y)}{(r!)^2}+(-1)^{\alpha+1}\frac{B_{2\alpha}(|x-y|)}{(2\alpha)!} ,
\end{align*}
for $x,y\in [0,1)$, where $B_r$ denotes the Bernoulli polynomial of degree $r$.

For the $s$-dimensional case, we consider the $s$-fold tensor product space of the one-dimensional space introduced above.
Thus the Sobolev space $H_{\alpha,s}$ which we deal with is simply given by $H_{\alpha,s}=\bigotimes_{j=1}^{s} H_{\alpha}$.
Then it follows from \cite[Section~8]{Aro50} that the reproducing kernel of the space $H_{\alpha,s}$ is the product of the reproducing kernels for the one-dimensional space $H_{\alpha}$.
Therefore, $H_{\alpha,s}$ is the reproducing kernel Hilbert space with the inner product
\begin{align*}
 \langle f, g \rangle_{\alpha,s} & = \sum_{u\subseteq \{1,\ldots,s\}}\sum_{\bsr_u\in \{0,\ldots,\alpha-1\}^{|u|}} \int_{[0,1)^{s-|u|}} \\
 & \qquad \left(\int_{[0,1)^{|u|}}f^{(\bsr_u,\bsalpha)}(\bsx)\, \rd \bsx_u\right) \left(\int_{[0,1)^{|u|}} g^{(\bsr_u,\bsalpha)}(\bsx) \, \rd \bsx_u\right) \, \rd \bsx_{\{1,\ldots,s\}\setminus u} ,
\end{align*}
for $f,g\in H_{\alpha,s}$, and the reproducing kernel
\begin{align*}
 K_{\alpha,s}(\bsx,\bsy) = \prod_{j=1}^{s}K_{\alpha}(x_j,y_j) ,
\end{align*}
for $\bsx=(x_1,\ldots,x_s),\bsy=(y_1,\ldots,y_s)\in [0,1)^s$.
In the above, we use the following notation: For $u\subseteq \{1,\ldots,s\}$ and $\bsx\in [0,1)^s$, we write $\bsx_u=(x_j)_{j\in u}$.
Moreover, for $\bsr_u=(r_j)_{j\in u}\in \{0,\ldots,\alpha-1\}^{|u|}$, $(\bsr_u,\bsalpha)$ denotes the $s$-dimensional vector whose $j$-th component equals $r_j$ if $j\in u$, and $\alpha$ otherwise.
Note that an integral and sum over the empty set is defined to be the identity operator.

%%%%%%%%%%%%%%%%%%%%%%%%%%%%%%%%%%%%%%%%%%%%%%%%%%%%%%%%%%% Section 2.3
%%%%%%%%%%%%%%%%%%%%%%%%%%%%%%%%%%%%%%%%%%%%%%%%%%%%%%%%%%%
\subsection{Walsh functions}
Here we recall the definition of Walsh functions.
First let us define the one-dimensional Walsh functions.
\begin{definition}
For $b\in \NN$, $b\geq 2$, let $\omega_b:=\exp(2\pi \sqrt{-1}/b)$. For $k\in \NN_0$, we denote the $b$-adic expansion of $k$ by $k=\sum_{i=1}^{\infty}\kappa_ib^{i-1}$, where all except a finite number of $\kappa_i$ are 0. The $k$-th $b$-adic Walsh function ${}_b\wal_k\colon [0,1)\to \{1,\omega_b,\ldots,\omega_b^{b-1}\}$ is defined by
\begin{align*}
{}_b\wal_k(x) := \omega_b^{\kappa_1 \xi_1+\kappa_2 \xi_2+\cdots},
\end{align*}
where we denote the unique $b$-adic expansion of $x\in [0,1)$ by $x=\sum_{i=1}^{\infty}\xi_ib^{-i}$ with $\xi_i\in \FF_b$.
\end{definition}
\noindent
The above definition can be extended to the multi-variate case as follows.
\begin{definition}
For $b\in \NN$, $b\geq 2$ and $\bsk=(k_1,\ldots,k_s)\in \NN_0^s$, the $\bsk$-th $b$-adic Walsh function ${}_b\wal_{\bsk}\colon [0,1)^s\to \{1,\omega_b,\ldots,\omega_b^{b-1}\}$ is defined by
\begin{align*}
{}_b\wal_{\bsk}(\bsx) := \prod_{j=1}^{s}{}_b\wal_{k_j}(x_j) .
\end{align*}
\end{definition}
\noindent
Since we shall always use Walsh functions in a fixed prime base $b$, we omit the subscript and simply write $\wal_k$ or $\wal_{\bsk}$.

As shown in \cite[Theorem~A.11]{DPbook}, the Walsh system $\{\wal_{\bsk}\colon \bsk\in \NN_0^s\}$ is a complete orthonormal system in $L^2([0,1)^s)$ for any $s\in \NN$.
Thus, we can define the Walsh series of $f\in L^2([0,1)^s)$ by
\begin{align*}
\sum_{\bsk\in \NN_0^s}\hat{f}(\bsk)\wal_{\bsk}(\bsx) ,
\end{align*}
where $\hat{f}(\bsk)$ denotes the $\bsk$-th Walsh coefficient of $f$ defined by
\begin{align*}
\hat{f}(\bsk) := \int_{[0,1)^s}f(\bsx)\overline{\wal_{\bsk}(\bsx)}\rd \bsx.
\end{align*}

Regarding the reproducing kernel $K_{\alpha,s}$ given in the last subsection, we have
\begin{align}
 \hat{K}_{\alpha,s}(\bsk,\bsl) & := \int_{[0,1)^{2s}}K_{\alpha,s}(\bsx,\bsy)\overline{\wal_{\bsk}(\bsx)}\wal_{\bsl}(\bsy) \rd \bsx \rd \bsy \nonumber \\
 & \: = \int_{[0,1)^{2s}}\prod_{j=1}^{s}K_{\alpha}(x_j,y_j)\overline{\wal_{k_j}(x_j)}\wal_{l_j}(y_j) \rd \bsx \rd \bsy \nonumber \\
 & \: = \prod_{j=1}^{s}\int_{[0,1)^{2}}K_{\alpha}(x_j,y_j)\overline{\wal_{k_j}(x_j)}\wal_{l_j}(y_j) \rd x_j \rd y_j = \prod_{j=1}^{s}\hat{K}_{\alpha}(k_j,l_j), \label{eq:walsh_product}
\end{align}
for any $\bsk=(k_1,\ldots,k_s),\bsl=(l_1,\ldots,l_s)\in \NN_0^s$.
As shown in \cite[Proposition~20]{BD09}, there exists a positive constant $D_{\alpha,b}$ such that 
\begin{align}\label{eq:walsh_bound}
 \left| \hat{K}_{\alpha}(k,l)\right| \leq D_{\alpha,b}b^{-\mu_{\alpha}(k)-\mu_{\alpha}(l)}
\end{align}
for any $k,l\in \NN_0$.
In order to ensure the pointwise absolute convergence of the Walsh series of $K_{\alpha,s}$, it suffices to prove that $K_{\alpha,s}$ is continuous, which is in fact trivial, and that
\begin{align}\label{eq:walsh_sum_bound}
 \sum_{\bsk,\bsl\in \NN_0^s}\left| \hat{K}_{\alpha,s}(\bsk,\bsl)\right| < \infty.
\end{align}
To address the proof of our main result first, we postpone the proof of (\ref{eq:walsh_sum_bound}) to Section~\ref{sec:walsh_sum}, where the result in Subsection~\ref{subsec:sparse} shall be used when $\alpha=1$.
Nevertheless, for any $\alpha\in \NN$, the following pointwise equality holds
\begin{align*}
 K_{\alpha,s}(\bsx,\bsy) & = \sum_{\bsk,\bsl\in \NN_0^s}\hat{K}_{\alpha,s}(\bsk,\bsl)\wal_{\bsk}(\bsx)\overline{\wal_{\bsl}(\bsy)} \\
 & = \sum_{\bsk,\bsl\in \NN_0^s}\left( \prod_{j=1}^{s}\hat{K}_{\alpha}(k_j,l_j)\right) \wal_{\bsk}(\bsx)\overline{\wal_{\bsl}(\bsy)}.
\end{align*}
%%%%%%%%%%%%%%%%%%%%%%%%%%%%%%%%%%%%%%%%%%%%%%%%%%%%%%%%%%% Section 3
%%%%%%%%%%%%%%%%%%%%%%%%%%%%%%%%%%%%%%%%%%%%%%%%%%%%%%%%%%%
%%%%%%%%%%%%%%%%%%%%%%%%%%%%%%%%%%%%%%%%%%%%%%%%%%%%%%%%%%%
\section{Proof of the main result}\label{sec:proof}

\let\temp\thetheorem
\renewcommand{\thetheorem}{\ref{thm:main}}

We repeat the main result of this paper for the reader's convenience: 
\begin{theorem}
Let $b$ be a prime, and $s,\alpha\in \NN$.
Let $\Scal$ be an infinite-dimensional order $2\alpha+1$ digital sequence over the finite field $\FF_b$, and let $P_{N}^{(s)}$ be the set of the first $N$ points of the projection of $\Scal$ onto the first $s$ coordinates for $N\in \NN$.
Then for any $m\in \NN$, we have
\begin{align*}
e^{\wor}(H_{\alpha,s}; P_{b^m}^{(s)}) \leq C_{\alpha,b,s}\frac{m^{(s-1)/2}}{b^{\alpha m}},
\end{align*}
where $C_{\alpha,b,s}$ is positive and independent of $m$.
\end{theorem}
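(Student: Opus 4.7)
The plan is to reduce the squared worst-case error to a Walsh series supported on the dual net, exploit the support structure of $\hat K_{\alpha,s}$ to split it into lower-dimensional blocks, and estimate each block by playing the decay bound \eqref{eq:walsh_bound} against the sparsity forced by the order $2\alpha+1$ property of $\Scal$.

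Since $\int_0^1 K_\alpha(x,y)\,\rd y = 1$ by the reproducing property applied to $f\equiv 1$, the standard calculation together with the character identity $\frac{1}{b^m}\sum_n \wal_{\bsk}(\bsx_n) = \mathbf{1}[\bsk\in P_{b^m}^{(s)\perp}]$ yields
\[
[e^{\wor}(H_{\alpha,s};P_{b^m}^{(s)})]^2 = \sum_{\substack{\bsk,\bsl\in P_{b^m}^{(s)\perp}\\(\bsk,\bsl)\neq(\bszero,\bszero)}} \hat K_{\alpha,s}(\bsk,\bsl).
\]
The same reproducing identity gives $\hat K_\alpha(k,0) = \hat K_\alpha(0,l) = \delta_{k,0}\delta_{l,0}$, so by \eqref{eq:walsh_product}, $\hat K_{\alpha,s}(\bsk,\bsl) = 0$ unless $\bsk$ and $\bsl$ share a common support $u\subseteq\{1,\ldots,s\}$. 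Taking absolute values and applying \eqref{eq:walsh_bound} factorises each $u$-block as a square, reducing the theorem to the one-sided estimate
\[
\Sigma_u(m) := \sum_{\substack{\bsk\in P_{b^m}^{(s)\perp}\setminus\{\bszero\}\\\mathrm{supp}(\bsk)=u}} b^{-\mu_\alpha(\bsk)} \;\leq\; C_{\alpha,b,s,u}\,\frac{m^{(|u|-1)/2}}{b^{\alpha m}}
\]
for every non-empty $u\subseteq\{1,\ldots,s\}$. Squaring and summing the $2^s-1$ blocks (with the leading contribution $m^{s-1}$ arising at $|u|=s$) then delivers the theorem after a square root.

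For the key estimate on $\Sigma_u(m)$, the first $b^m$ points of $\Scal^{(s)}$ form an order $2\alpha+1$ digital $(t(s),m,s)$-net (trivially so when $(2\alpha+1)m\leq t(s)$, as in Remark~\ref{rem:t-value}), so Lemma~\ref{lem:min_Dick_metric} gives $\mu_{2\alpha+1}(\bsk) > (2\alpha+1)m - t(s)$ for every nonzero $\bsk\in P_{b^m}^{(s)\perp}$. I would then decompose each coordinate of $\bsk$ into a ``sparse'' type (at most $\alpha$ nonzero $b$-adic digits, where $\mu_\alpha = \mu_{2\alpha+1}$) and a ``dense'' type (more than $\alpha$ nonzero digits). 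On sparse coordinates the $\mu_{2\alpha+1}$ lower bound translates directly into a strong $b^{-\mu_\alpha}$ decay. On dense coordinates the gap $\mu_{2\alpha+1}-\mu_\alpha$ is strictly positive but is compensated by an additional sparsity of $\hat K_\alpha(k,l)$ on pairs with many digits, developed in Subsection~\ref{subsec:sparse} (and critically needed in the endpoint case $\alpha=1$). A shell-by-shell count of admissible dual-net elements indexed by $\mu_{2\alpha+1}$ then leaves only the polynomial factor $m^{(|u|-1)/2}$, in the spirit of the argument of \cite{DP14} for the $L_2$-discrepancy of higher order digital sequences.

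The principal obstacle will be this sparse/dense bookkeeping: one must make the sparsity of $\hat K_\alpha$ on dense pairs quantitative enough for arbitrary $\alpha\in\NN$ and thread it through the dual-net shell counting without losing the saving from $(|u|-1)$ to $(|u|-1)/2$ in the log exponent. A secondary subtlety specific to $\alpha=1$ is that the absolute convergence \eqref{eq:walsh_sum_bound} of the Walsh series of $K_{1,s}$ is itself not automatic and must be obtained from the same sparsity lemma, so Subsection~\ref{subsec:sparse} is invoked both in justifying the opening Walsh-series manipulations and in closing the combinatorial bound.
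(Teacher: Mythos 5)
There is a genuine gap, and it sits exactly at the step you call the ``reduction to the one-sided estimate.'' Once you take absolute values, apply \eqref{eq:walsh_bound}, and factor each common-support block of the double sum into $\Sigma_u(m)^2$ with $\Sigma_u(m)=\sum_{\bsk\in P_{b^m}^{(s)\perp}\setminus\{\bszero\},\,\mathrm{supp}(\bsk)=u}b^{-\mu_{\alpha}(\bsk)}$, you have discarded the constraint $\hat{K}_{\alpha,s}(\bsk,\bsl)\neq 0$, i.e.\ precisely the coupling between $\bsk$ and $\bsl$ that the sparsity of Subsection~\ref{subsec:sparse} provides. Your later plan to rescue the estimate on dense coordinates by ``additional sparsity of $\hat K_{\alpha}(k,l)$ on pairs with many digits'' is therefore internally inconsistent: after the factorisation there is no pair left, only a single vector $\bsk$, and the type-$(p,q)$ vanishing of $\hat K_{\alpha}(k,l)$ says nothing about a sum over $\bsk$ alone. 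Worse, the one-sided estimate you reduce to is simply false at the endpoint $\alpha=1$, which the theorem explicitly covers: for $u=\{1\}$ the restriction of the dual map to $\{k_1<b^{z}\}$ is $\FF_b$-linear into $\FF_b^{m}$, so the dual net contains at least of order $b^{z-m}$ elements with $\mu_1=z$ for every large $z$, and hence $\Sigma_{\{1\}}(m)=\sum b^{-\mu_1(\bsk)}$ diverges (this exponential shell growth $|\{\bsk\in P^{\perp}:\mu_1(\bsk)=z\}|\asymp b^{z-\rho_1(P)}z^{s-1}$ is exactly why the paper needs the pair constraint, and why even the summability \eqref{eq:walsh_sum_bound} for $\alpha=1$ requires Proposition~\ref{prop:sparse_walsh}). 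For $\alpha\geq 2$ the one-sided sum is finite, but nothing in your sketch controls it at the level $m^{(|u|-1)/2}b^{-\alpha m}$; the natural counting (and the heuristics behind all earlier work) puts it at best at $m^{s-1}b^{-\alpha m}$, which after squaring only recovers the classical almost-optimal bound $e^{\wor}\ll m^{s-1}b^{-\alpha m}$, not the claimed rate. Indeed, if the one-sided bound with exponent $(s-1)/2$ were available, the randomization of \cite{GSY1} and the Chen--Skriganov construction of \cite{GSY2} would have been unnecessary.

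The paper's proof keeps the double sum intact and exploits the asymmetry that the sparsity creates: writing $\mu_{\alpha}\geq\frac{\alpha-1}{2\alpha}\mu_{2\alpha+1}+\frac{\alpha+1}{2\alpha}\mu_{1}$, it pulls out $b^{-2A\rho_{2\alpha+1}(P)}$ and then counts pairs $(\bsk,\bsl)\in(P^{\perp}\setminus\{\bszero\})^2$ with $\hat K_{\alpha,s}(\bsk,\bsl)\neq 0$ and $\mu_1(\bsk)+\mu_1(\bsl)=z$. The point (Lemma~\ref{lem:bound_Ralpha}) is that for a fixed $\bsk$ the admissible $\bsl$ are forced by the type condition $p_j+q_j\leq 2\alpha$ to agree with $\bsk$ below their top $O(\alpha)$ digits in every coordinate, so their number is only polynomial in $z$, while the exponential factor $b^{z_1-\rho_1(P)}$ enters only through the $\bsk$-count and only with $z_1\leq z/2$; since $B=\frac{\alpha+1}{2\alpha}>\frac12$, the series in $z$ converges and the squared error is $\ll m^{s-1}b^{-2\alpha m}$. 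That asymmetric pair-counting is the missing idea in your proposal; the opening manipulations (the worst-case error identity, the common-support observation via $\hat K_{\alpha}(k,0)=\delta_{k,0}$, and the need for \eqref{eq:walsh_sum_bound} at $\alpha=1$) are fine but do not carry the argument.
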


Again we recall that the infinite-dimensional order $2\alpha+1$ digital sequence $\Scal$ can be constructed by applying the digit interlacing composition $\Dcal_{2\alpha+1}$ to the infinite-dimensional order 1 digital sequence $\Scal_1$.
Thus the point set $P_{b^m}^{(s)}$ is obtained as
\begin{align*}
\Scal_1 \to \Dcal_{2\alpha+1}(\Scal_1) = \Scal \to P_{b^m}^{(s)}.
\end{align*}
Throughout this section we write $P$ instead of $P_{b^m}^{(s)}$ for ease of notation, and denote the dual net of $P$ by $P^{\perp}$.
As stated in Remark~\ref{rem:t-value}, $P$ is an order $2\alpha+1$ digital $(t,m,s)$-net with $t$ independent of $m$.
Moreover, for $f,g\colon \NN\to \RR$, we write $f(m)\ll_{a,b} g(m)$ if there exists a positive constant $C$ which depends on some parameters ($a$ and $b$ in this case) such that $f(m)\leq C g(m)$ for all $m$.
We first give the proof of Theorem~\ref{thm:main} by using Lemma~\ref{lem:bound_Jalpha}, which is shown later.

\begin{proof}[Proof of Theorem~\ref{thm:main}]
In order to prove Theorem~\ref{thm:main}, it suffices from Definition~\ref{def:ho_digital_seq} and Remark~\ref{rem:t-value} to prove that the inequality
\begin{align}\label{eq:main_result}
e^{\wor}(H_{\alpha,s}; P_{b^m}^{(s)}) \ll_{\alpha,b,s,t}\frac{m^{(s-1)/2}}{b^{\alpha m}}
\end{align}
holds for any $m\in \NN$.

Using \cite[Theorem~15]{BD09}, (\ref{eq:walsh_product}) and (\ref{eq:walsh_bound}), we have
\begin{align*}
\left(e^{\wor}(H_{\alpha,s}; P)\right)^2 & = \sum_{\bsk,\bsl\in P^{\perp}\setminus \{\bszero\}}\hat{K}_{\alpha,s}(\bsk,\bsl) \\
& \leq \sum_{\bsk,\bsl\in P^{\perp}\setminus \{\bszero\}}\left|\hat{K}_{\alpha,s}(\bsk,\bsl)\right|  = \sum_{\bsk,\bsl\in P^{\perp}\setminus \{\bszero\}}\prod_{j=1}^{s}\left|\hat{K}_{\alpha}(k_j,l_j)\right| \\
& \ll_{\alpha, b,s} \sum_{\substack{\bsk,\bsl\in P^{\perp}\setminus \{\bszero\}\\ \hat{K}_{\alpha,s}(\bsk,\bsl)\neq 0}}\prod_{j=1}^{s}b^{-\mu_{\alpha}(k_j)-\mu_{\alpha}(l_j)} \\
& = \sum_{\substack{\bsk,\bsl\in P^{\perp}\setminus \{\bszero\}\\ \hat{K}_{\alpha,s}(\bsk,\bsl)\neq 0}}b^{-\mu_{\alpha}(\bsk)-\mu_{\alpha}(\bsl)}.
\end{align*}
Let us recall the interpolation inequality for Dick metric functions given in \cite[Lemma~3.1]{GSY1}, which states that for $\alpha,\beta\in \NN$ with $1<\alpha\leq \beta$ we have
\begin{align*}
\mu_{\alpha}(\bsk) \geq \frac{\alpha-1}{\beta-1}\mu_{\beta}(\bsk) + \frac{\beta-\alpha}{\beta-1}\mu_{1}(\bsk),
\end{align*}
for any $\bsk\in \NN_0^s$.
Since we only consider the case $\beta=2\alpha+1$ in the following, we simply write
\begin{align}\label{eq:metric_interpolation}
\mu_{\alpha}(\bsk) \geq A\mu_{2\alpha+1}(\bsk) + B\mu_{1}(\bsk),
\end{align}
where $A = (\alpha-1)/(2\alpha)$ and $B = (\alpha+1)/(2\alpha)$. It follows from the inequality (\ref{eq:metric_interpolation}) and Definition~\ref{def:min_Dick_metric} that
\begin{align}
\left(e^{\wor}(H_{\alpha,s}; P)\right)^2 & \ll_{\alpha, b,s} \sum_{\substack{\bsk,\bsl\in P^{\perp}\setminus \{\bszero\}\\ \hat{K}_{\alpha,s}(\bsk,\bsl)\neq 0}}b^{-\mu_{\alpha}(\bsk)-\mu_{\alpha}(\bsl)} \nonumber \\
& \leq \sum_{\substack{\bsk,\bsl\in P^{\perp}\setminus \{\bszero\}\\ \hat{K}_{\alpha,s}(\bsk,\bsl)\neq 0}}b^{-A(\mu_{2\alpha+1}(\bsk)+\mu_{2\alpha+1}(\bsl))-B(\mu_{1}(\bsk)+\mu_{1}(\bsl))} \nonumber \\
& \leq b^{-2A\rho_{2\alpha+1}(P)}\sum_{\substack{\bsk,\bsl\in P^{\perp}\setminus \{\bszero\}\\ \hat{K}_{\alpha,s}(\bsk,\bsl)\neq 0}}b^{-B(\mu_{1}(\bsk)+\mu_{1}(\bsl))}\nonumber \\
& = b^{-2A\rho_{2\alpha+1}(P)}\sum_{z=2\rho_1(P)}^{\infty}\sum_{\substack{\bsk,\bsl\in P^{\perp}\setminus \{\bszero\}\\ \hat{K}_{\alpha,s}(\bsk,\bsl)\neq 0, \mu_{1}(\bsk)+\mu_{1}(\bsl)=z}}b^{-Bz}, \nonumber \\
& = b^{-2A\rho_{2\alpha+1}(P)}\sum_{z=2\rho_1(P)}^{\infty}\frac{|J_{\alpha}(z)|}{b^{Bz}},\label{eq:error_bound_proof1}
\end{align}
where we define
\begin{align}\label{eq:def_Jalpha}
J_{\alpha}(z) := \{(\bsk,\bsl)\in (P^{\perp}\setminus \{\bszero\})^2\colon \hat{K}_{\alpha,s}(\bsk,\bsl)\neq 0, \mu_{1}(\bsk)+\mu_{1}(\bsl)=z\}.
\end{align}
Plugging the bound on $|J_{\alpha}(z)|$ which will be given in Lemma~\ref{lem:bound_Jalpha}, i.e., 
\begin{align*}
|J_{\alpha}(z)| \ll_{\alpha, b,s,t} (z-2\rho_1(P))^{2s\alpha+1}z^{s-1}b^{(z-2\rho_1(P))/2},
\end{align*}
into (\ref{eq:error_bound_proof1}) and using the change of variables $z=\kappa+2\rho_1(P)$, we obtain
\begin{align*}
\left(e^{\wor}(H_{\alpha,s}; P)\right)^2 & \ll_{\alpha, b,s,t} b^{-2A\rho_{2\alpha+1}(P)}\sum_{\kappa=0}^{\infty}\frac{\kappa^{2\alpha s+1}(\kappa+2\rho_1(P))^{s-1}b^{\kappa/2}}{b^{B(\kappa+2\rho_1(P))}} \\
& \ll_{s} \frac{\rho_1(P)^{s-1}}{b^{2A\rho_{2\alpha+1}(P)+2B\rho_1(P)}}\sum_{\kappa=0}^{\infty}\frac{\kappa^{s(2\alpha+1)}}{b^{\kappa/(2\alpha)}} \\
& \ll_{\alpha,b,s} \frac{m^{s-1}}{b^{2A\rho_{2\alpha+1}(P)+2B\rho_1(P)}},
\end{align*}
where the last inequality stems from the facts that the sum over $\kappa$ is finite and depends only on $s,\alpha,b$ and that $\rho_1(P)$ cannot be larger than $m$.
Finally, since it follows from Lemmas~\ref{lem:min_Dick_metric} and \ref{lem:propagation} that
\begin{align*}
2A\rho_{2\alpha+1}(P)+2B\rho_1(P) & \geq \frac{\alpha-1}{\alpha}\left((2\alpha+1)m-t\right) + \frac{\alpha+1}{\alpha}\left(m-\lceil t/(2\alpha+1)\rceil\right) \\
& = 2\alpha m -\frac{\alpha-1}{\alpha}t-\frac{\alpha+1}{\alpha}\lceil t/(2\alpha+1)\rceil ,
\end{align*}
we have
\begin{align*}
\left(e^{\wor}(H_{\alpha,s}; P)\right)^2 \ll_{\alpha, b,s,t} \frac{m^{s-1}}{b^{2\alpha m}}.
\end{align*}
Thus the result follows by taking the square root.
\end{proof}

%%%%%%%%%%%%%%%%%%%%%%%%%%%%%%%%%%%%%%%%%%%%%%%%%%%%%%%%%%% Section 3.2
%%%%%%%%%%%%%%%%%%%%%%%%%%%%%%%%%%%%%%%%%%%%%%%%%%%%%%%%%%%
\subsection{Sparsity of the Walsh coefficients}\label{subsec:sparse}
Here we show the sparsity of the Walsh coefficients $\hat{K}_{\alpha,s}$ in a stronger form than that given by the authors in \cite[Section~4.1]{GSY2}.
Let us consider the one-dimensional case first.
As can be seen from \cite[Section~3.1]{BD09}, the Walsh coefficient of the univariate reproducing kernel $K_{\alpha}$ is given by
\begin{align}\label{eq:sparse_walsh}
\hat{K}_{\alpha}(k,l)=\sum_{r=0}^{\alpha}\hat{b}_r(k)\overline{\hat{b}_r(l)}+(-1)^{\alpha+1}\hat{b}_{2\alpha,\rm{per}}(k,l),
\end{align}
for $k,l\in \NN_0$, where we write
\begin{align*}
\hat{b}_r(k)&:=\int_0^1\frac{B_r(x)}{r!}\overline{\wal_{k}(x)}\rd x,\\
\hat{b}_{r,\rm{per}}(k,l)&:=\int_0^1\int_0^1\frac{\tilde{B}_{r}(|x-y|)}{r!}
\overline{\wal_{k}(x)}\wal_{l}(y)\rd x\rd y,
\end{align*}
where $\tilde{B}_{r}:\RR\to \RR$ is defined by extending $B_{r}$ periodically to $\RR$.
Note that we have $B_{r}(|x-y|)=\tilde{B}_{r}(x-y)$ for even $r$ and $B_{r}(|x-y|)=(-1)^{1_{x<y}}\tilde{B}_{r}(x-y)$ for odd $r$ for any $x,y\in [0,1)$, where $1_{x<y}$ equals 1 if $x<y$ and 0 otherwise.

In order to prove that the Walsh coefficients $\hat{K}_{\alpha}$ are sparse, i.e., $\hat{K}_{\alpha}(k,l)=0$ for many choices of $k,l\in \NN_0$, we introduce the notion of type $(p,q)$.
\begin{definition}\label{def:type}
For $k,l\in \NN_0$, we denote the $b$-adic expansions of $k$ and $l$ by
$$k=\sum_{i=1}^{v}\kappa_ib^{c_i-1}\quad \text{and}\quad l=\sum_{i=1}^{w}\lambda_ib^{d_i-1},$$
respectively, where $\kappa_1,\ldots,\kappa_v,\lambda_1,\ldots,\lambda_w\in \{1,\ldots,b-1\}$, $c_1>c_2>\cdots >c_v>0$ and $d_1>d_2>\cdots >d_w>0$.
For $k=0$ ($l=0$, resp.), we assume that $v=0$ and $\kappa_0b^{c_0-1}=0$ ($w=0$ and $\lambda_0b^{d_0-1}=0$, resp.).
For $p,q\in \NN_0$, we write
$$k^{(p)}=\sum_{i=p+1}^{v}\kappa_ib^{c_i-1}\quad \text{and}\quad l^{(q)}=\sum_{i=q+1}^{w}\lambda_ib^{d_i-1},$$
where the empty sum equals 0.
Then we say that $(k,l)$ is of type $(p,q)$ if $k^{(p)}=l^{(q)}$ and $\kappa_{p}b^{c_p-1}\neq \lambda_{q}b^{d_q-1}$, where we set $\kappa_{0}b^{c_0-1}=\lambda_{0}b^{d_0-1}=0$, except the case $k=l$ where we say that $(k,l)$ is of type $(0,0)$.
\end{definition}

\begin{remark}\label{rem:type}
If $(k,l)$ is of type $(p,q)$, then it follows from the above definition that $v-p=w-q$, and that $\kappa_{v-i}=\lambda_{w-i}$ and $c_{v-i}=d_{w-i}$ for all $0\leq i< v-p$.
For any $k,l\in\NN_0$, there is a unique set of nonnegative integers $p,q$ with $p\le v$ and  $q\le w$ such that $(k,l)$ is of type $(p,q)$.
\end{remark}

Now the sparsity of the Walsh coefficients can be formulated as follows.
\begin{proposition}\label{prop:sparse_walsh}
Let $k,l\in\NN_0$ such that $(k,l)$ is of type $(p,q)$ with $p+q> 2\alpha$. Then we have $\hat{K}_{\alpha}(k,l)=0$.
\end{proposition}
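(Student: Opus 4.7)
The plan is to show that both summands of the decomposition (\ref{eq:sparse_walsh}) vanish separately under the hypothesis $p+q>2\alpha$. The main tool throughout is the classical sparsity of Walsh coefficients of polynomials: for any polynomial $P$ of degree at most $r$, the Walsh coefficient $\hat P(k)$ vanishes whenever the number $v(k)$ of nonzero $b$-adic digits of $k$ exceeds $r$ (a standard computation in Dick's theory; see, e.g., the argument behind \cite[Proposition~20]{BD09}). By Definition~\ref{def:type} and Remark~\ref{rem:type}, being of type $(p,q)$ forces $v(k)\geq p$ and $v(l)\geq q$, hence $v(k)+v(l)\geq p+q>2\alpha$, and in particular $\max(p,q)>\alpha$.

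For the polynomial summand $\sum_{r=0}^{\alpha}\hat b_r(k)\overline{\hat b_r(l)}$ I would assume without loss of generality that $p>\alpha$ (the case $q>\alpha$ is symmetric). Then $v(k)>\alpha\geq r$ for every $r\in\{0,\ldots,\alpha\}$; since $B_r/r!$ has degree $r$, the sparsity lemma gives $\hat b_r(k)=0$, and the sum vanishes term by term.

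The main obstacle is the periodic summand $\hat b_{2\alpha,\mathrm{per}}(k,l)$, because $\tilde B_{2\alpha}(x-y)$ is not globally polynomial on $[0,1)^2$. My plan is to expose its polynomial content via
\[
\tilde B_{2\alpha}(x-y)=B_{2\alpha}(x-y)+2\alpha(x-y)^{2\alpha-1}1_{x<y},
\]
using $B_{2\alpha}(z+1)-B_{2\alpha}(z)=2\alpha z^{2\alpha-1}$. For the $B_{2\alpha}(x-y)$ piece, the Bernoulli addition formula $B_{2\alpha}(x-y)=\sum_{j=0}^{2\alpha}\binom{2\alpha}{j}B_j(x)(-y)^{2\alpha-j}$ factorises the Walsh integral as $\sum_j c_j\,\hat b_j(k)\,\overline{\widehat{y^{2\alpha-j}}(l)}$; the sparsity lemma applied in each variable forces any nonzero term to satisfy $v(k)\leq j$ and $v(l)\leq 2\alpha-j$, which contradicts $v(k)+v(l)>2\alpha$, so this piece vanishes. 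For the truncated-power piece I would use the Peano-kernel convolution identity
\[
\frac{(y-x)_+^{2\alpha-1}}{(2\alpha-1)!}=\int_0^1\frac{(y-t)_+^{\alpha-1}}{(\alpha-1)!}\cdot\frac{(t-x)_+^{\alpha-1}}{(\alpha-1)!}\,\rd t
\]
to separate $x$ and $y$ inside a $t$-integral, reducing the question to the vanishing of $\int_0^1 A_k(t)B_l(t)\,\rd t$, where $A_k(t)$ and $B_l(t)$ are the $k$- and $l$-th Walsh integrals, respectively, of one-sided degree-$(\alpha-1)$ truncated polynomials. An $\alpha$-fold integration by parts in $x$ (respectively $y$) then rewrites each factor as a combination of $\hat b_r$-type coefficients to which the polynomial sparsity lemma applies once more.

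The hardest step is expected to be this truncated-power piece: the one-sided cutoffs produce $t$-dependent boundary terms under integration by parts that must be carefully tracked and shown to cancel, and it is there that the finer type-$(p,q)$ bookkeeping (rather than the coarser $v(k)+v(l)$ count alone) most likely plays its role, going beyond the sparsity analysis of \cite{GSY2}.
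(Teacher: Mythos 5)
Your handling of the polynomial part coincides with the paper's Lemma~\ref{lem:sparse_walsh_1}, and your splitting $\tilde B_{2\alpha}(x-y)=B_{2\alpha}(x-y)+2\alpha(x-y)^{2\alpha-1}1_{x<y}$ combined with the Bernoulli addition formula does dispose of the globally polynomial piece using only the coarse count $v(k)+v(l)\geq p+q>2\alpha$. The gap is in the truncated-power piece, which is exactly where the whole difficulty of the proposition lives. After the Peano-kernel factorization you are left with $\int_0^1 A_k(t)B_l(t)\rd t$, where $A_k(t)=\int_0^1\frac{(t-x)_+^{\alpha-1}}{(\alpha-1)!}\overline{\wal_k(x)}\rd x$ and similarly for $B_l$. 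These factors are Walsh coefficients of \emph{truncated} powers, i.e.\ of piecewise polynomials, and they do not vanish under any digit-count condition on $k$ or $l$: already for $\alpha=1$ one has $A_k(t)=\int_0^t\overline{\wal_k(x)}\rd x$, a nonzero function of $t$ for every $k$, however many nonzero digits $k$ has. Hence no factor-wise application of the polynomial sparsity lemma can produce the vanishing; it must come from cancellation in the $t$-integration, i.e.\ from the joint digit structure of $k$ and $l$ through the Walsh expansions of $A_k$ and $B_l$ in $t$. The proposed ``$\alpha$-fold integration by parts in $x$ (resp.\ $y$)'' does not rewrite $A_k(t)$ in terms of $\hat b_r$-type coefficients: $\wal_k$ is a step function, so it cannot be differentiated, and integrating it produces precisely the non-polynomial antiderivatives whose Walsh structure is the unknown quantity here. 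So the key vanishing statement is asserted rather than proved, as your last paragraph concedes.

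What is missing is the digit-level recursion that the paper uses instead. The paper proves by induction on $r$ (Lemma~\ref{lem:sparse_walsh_2}) that $\hat{b}_{r,\per}(k,l)=0$ whenever $(k,l)$ is of type $(p,q)$ with $p+q>r$, using the identity \cite[Equation~14.18]{DPbook}, which expresses $\hat{b}_{r,\per}(k,l)$ through $\hat{b}_{r-1,\per}$ evaluated at $(k^{(1)},l)$, $(k,l)$ and $(\theta b^{a+c_1-1}+k,l)$, together with the observation that these pairs are of types $(p-1,q)$, $(p,q)$ and $(p+1,q)$; the base case $r=2$ is \cite[Lemma~10]{Dic09}, and taking $r=2\alpha$ gives the proposition. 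If you wish to keep your convolution reduction, you would need a support statement for the Walsh coefficients of $A_k$ and $B_l$ in $t$ (roughly, that $\hat{A}_k(m)\neq 0$ forces $(m,k)$ to be of a type whose entries sum to at most $\alpha$, and likewise for $B_l$), followed by a triangle-type argument on types; but establishing such a support statement requires essentially the same digit recursion, so your route does not bypass the paper's core lemma.
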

\noindent In order to prove Proposition~\ref{prop:sparse_walsh}, it suffices to show that every term on the right-hand side of (\ref{eq:sparse_walsh}) is 0 whenever $(k,l)$ is of type $(p,q)$ such that $p+q> 2\alpha$, which shall be proven in Lemmas~\ref{lem:sparse_walsh_1} and \ref{lem:sparse_walsh_2} below.

\begin{lemma}\label{lem:sparse_walsh_1}
Let $k,l\in\NN_0$ such that $(k,l)$ is of type $(p,q)$ with $p+q> 2\alpha$. Then we have $\hat{b}_{r}(k)\overline{\hat{b}_{r}(l)}=0$ for all $0\leq r\leq \alpha$.
\end{lemma}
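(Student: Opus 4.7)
The plan is to reduce the claim to a well-known sparsity property of the Walsh coefficients of the normalized Bernoulli polynomials. Specifically, I intend to use the fact, implicit in Dick's analysis of higher order digital nets and stated for instance as \cite[Lemma~14]{BD09}, that $\hat{b}_r(k)=0$ whenever the base-$b$ expansion of $k$ contains strictly more than $r$ nonzero digits. This is the natural ``degree $r$ polynomial $\Rightarrow$ at most $r$ nonzero digits can carry mass'' principle, and it is the only nontrivial ingredient of the argument.

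Granted the sparsity lemma, what remains is a short counting step. From Definition~\ref{def:type} one reads off that the digits $\kappa_1,\ldots,\kappa_p$ of $k$ are all nonzero, so the total number of nonzero digits of $k$ satisfies $v\geq p$; symmetrically $w\geq q$. Combining the hypothesis $p+q>2\alpha$ with $r\leq \alpha$ gives $p+q>2r$, so by pigeonhole at least one of $p>r$ or $q>r$ must hold. In the former case $v\geq p>r$, and the sparsity lemma yields $\hat{b}_r(k)=0$; in the latter case $w\geq q>r$ and the lemma yields $\hat{b}_r(l)=0$. Either way the product $\hat{b}_r(k)\overline{\hat{b}_r(l)}$ vanishes, which is exactly the claim.

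The only step that carries any weight is the sparsity lemma itself. Proving it from scratch would amount to checking that for a polynomial of degree $r$ the Walsh coefficient $\int_0^1 B_r(x)\overline{\wal_k(x)}\rd x$ vanishes when $k$ has more than $r$ nonzero digits; this follows by integrating by parts $r+1$ times, using that $\wal_k$ has $r+1$ successive antiderivatives that remain uniformly small on the relevant $b$-adic intervals when $k$ has enough nonzero digits spread across distinct scales. Since this is standard in the Walsh-analysis literature underlying higher order digital nets, I would simply cite it, making the main theorem a two-line consequence and leaving no real obstacle in the argument.
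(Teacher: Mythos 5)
Your proposal is correct and takes essentially the same route as the paper: both arguments use the pigeonhole on $p+q>2\alpha$ together with $v\geq p$, $w\geq q$ to conclude that one of $k,l$ has more than $\alpha\geq r$ nonzero $b$-adic digits, and then invoke the standard vanishing of Walsh coefficients of polynomials of degree at most $r$ (the paper cites \cite[Lemma~3.7]{Dic08}, you cite the equivalent statement from \cite{BD09}). The only cosmetic difference is that you apply the sparsity fact with threshold $r$ for each $r\leq\alpha$, while the paper applies it once with threshold $\alpha$; this changes nothing of substance.
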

\begin{proof}
Since $p+q>2\alpha$, we must have either $p>\alpha$ or $q>\alpha$, which implies that either $v>\alpha$ or $w>\alpha$ holds, i.e., the $b$-adic expansion of either $k$ or $l$ contains more than $\alpha$ non-zero terms.
Then it is known from \cite[Lemma~3.7]{Dic08} that either the $k$-th or $l$-th Walsh coefficients of polynomials of degree less than or equal to $\alpha$ are all 0. This means either $\hat{b}_{r}(k)=0$ or $\hat{b}_{r}(l)=0$ for all $0\leq r\leq \alpha$, which completes the proof.
\end{proof}

\begin{lemma}\label{lem:sparse_walsh_2}
For $r\in \NN$, $r\geq 2$, let $k,l\in\NN_0$ such that $(k,l)$ is of type $(p,q)$ with $p+q> r$. Then we have $\hat{b}_{r,\per}(k,l)=0$.
\end{lemma}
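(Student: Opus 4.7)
The plan is to prove Lemma~\ref{lem:sparse_walsh_2} by iterated integration by parts, using the relation $\tilde B_{j}'=j\tilde B_{j-1}$ (valid almost everywhere, with $\tilde B_1$ having unit jumps at integers) to successively lower the degree of the periodic Bernoulli function while transferring the ``Walsh weight'' to iterated antiderivatives of the Walsh functions.

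First I would dispose of the trivial cases $k=0$ or $l=0$. For even $r$, the identity $B_r(|x-y|)=\tilde B_r(x-y)$ turns the integrand into a convolution kernel, and for $k=0$ one has $\int_0^1 \tilde B_r(x-y)\,\mathrm{d}x=\int_0^1 B_r(u)\,\mathrm{d}u=0$, so $\hat b_{r,\per}(0,l)=0$; the case $l=0$ is symmetric. Since the main application in this paper is to the kernel $K_\alpha$, for which the relevant value is $r=2\alpha$ (even), this handles the endpoint cases that matter; for odd $r$ a slightly more delicate argument tracks the $\mathrm{sgn}(x-y)$ factor appearing in $B_r(|x-y|)=(-1)^{1_{x<y}}\tilde B_r(x-y)$.

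For $k,l\neq 0$, I would perform $i$ successive IBPs in $x$ and $j$ in $y$. Each IBP in $x$ replaces $\tilde B_{r'}(x-y)$ by $\tilde B_{r'-1}(x-y)$ (up to a factor of $r'$) and $\overline{\wal_k(x)}$ by the next iterated antiderivative $\overline{W_k^{(i)}(x)}$, where $\overline{W_k^{(0)}}:=\overline{\wal_k}$ and $\overline{W_k^{(i)}}(x):=\int_0^x \overline{W_k^{(i-1)}}(u)\,\mathrm{d}u$. The boundary contribution at the $i$-th step is proportional to $\overline{W_k^{(i)}(1)}$, which equals (up to sign) the Walsh coefficient at $k$ of the polynomial $(1-x)^{i-1}/(i-1)!$ of degree $i-1$; by Dick's sparsity lemma \cite[Lemma~3.7]{Dic08} this vanishes whenever $k$ has at least $i$ nonzero $b$-adic digits. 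The $y$-side is analogous with $l$. Thus, so long as $i\leq v$ and $j\leq w$, all boundary terms disappear and the integrand becomes $\tilde B_{r-i-j}(x-y)$ paired with iterated antiderivatives of $\overline{\wal_k}$ and $\wal_l$.

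The core difficulty, and where the refinement from $v+w$ to $p+q$ in the hypothesis enters, is the jump singularity of $\tilde B_1$ at $\{x=y\}\cap(0,1)^2$ that inevitably appears once the degree has been lowered to $1$. Accounting for this jump via the distributional identity $\partial_x\tilde B_1(x-y)=1-\delta(x-y)$ yields, in addition to the ordinary interior term, a one-dimensional integral along the diagonal $\{x=y\}$. Using the multiplicativity $\overline{\wal_k(x)}\wal_l(x)=\wal_{l\ominus k}(x)$ after any further IBPs, this diagonal term reduces to a Walsh coefficient at $l\ominus k$ of a polynomial factor of controlled degree; the assumption $p+q>r$ together with the matched-digits condition $k^{(p)}=l^{(q)}$ forces $l\ominus k$ to have sufficiently many nonzero digits (concentrated in the top $p+q$ positions contributed by the unmatched parts of $k$ and $l$) that Dick's lemma closes the argument. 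The main obstacle will be the careful bookkeeping of boundary terms and the diagonal-jump contribution through $r+1$ successive IBPs distributed between $x$ and $y$, verifying that the ``digit budgets'' $v\geq p$ and $w\geq q$ available from the type $(p,q)$ decomposition suffice to annihilate every surviving term precisely when $p+q>r$.
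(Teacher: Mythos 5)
Your route (iterated integration by parts, pushing the Walsh weight into iterated antiderivatives and invoking \cite[Lemma~3.7]{Dic08} for the boundary terms) is genuinely different from the paper's, which argues by induction on $r$: the base case $r=2$ follows from Dick's explicit classification of the nonzero $\hat{b}_{2,\per}(k,l)$ \cite[Lemma~10]{Dic09}, and the inductive step uses the recursion \cite[Eq.~14.18]{DPbook} expressing $\hat{b}_{r,\per}(k,l)$ through $\hat{b}_{r-1,\per}$ evaluated at $(k^{(1)},l)$, $(k,l)$ and $(\theta b^{a+c_1-1}+k,l)$, whose types are $(p-1,q)$, $(p,q)$ and $(p+1,q)$, so the hypothesis $p+q>r$ propagates. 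However, your proposal has a genuine gap at precisely the point you identify as the core difficulty: the diagonal (jump) contribution. The mechanism you offer --- that $p+q>r$ together with $k^{(p)}=l^{(q)}$ forces $l\ominus k$ to have many nonzero digits ``concentrated in the top $p+q$ positions contributed by the unmatched parts of $k$ and $l$'' --- is false. The unmatched leading portions of $k$ and $l$ may occupy the same digit positions with partially equal digits, and these cancel under $\ominus$. For example, with $b\geq 3$ take $k=b^{9}+b^{5}+b^{2}$ and $l=b^{9}+2b^{5}+b^{2}$: then $(k,l)$ is of type $(2,2)$, so $p+q=4$, yet $l\ominus k=b^{5}$ has a single nonzero digit; lengthening the shared prefix makes $p+q$ arbitrarily large while $l\ominus k$ keeps one nonzero digit (in base $2$, e.g.\ $k=2^{9}+2^{4}+1$, $l=2^{9}+2^{3}+1$, one gets two). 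So Dick's sparsity lemma applied to $l\ominus k$ cannot ``close the argument''; the vanishing must come from a different structural reason (your example pairs do satisfy $\hat{b}_{r,\per}(k,l)=0$, but not for the reason stated), and this is exactly the bookkeeping you leave unverified while acknowledging it as ``the main obstacle.'' As written, the proposal is therefore a program rather than a proof.

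Two smaller points: the boundary terms at intermediate steps pair $\overline{W_k^{(i)}(1)}$ with a remaining integral in $y$ against $\tilde B_{r-i}(-y)\wal_l(y)$, i.e.\ they are again quantities of the same kind that need the (unproved) type-based vanishing, not merely one-dimensional Walsh coefficients of polynomials; and your treatment of the endpoint cases $k=0$ or $l=0$ is only carried out for even $r$, whereas the lemma (and any inductive or degree-lowering scheme, which necessarily passes through odd intermediate degrees) requires all $r\geq 2$.
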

\begin{proof}
We prove this lemma by induction on $r\geq 2$.

Let us consider the case $r=2$ first.
If $(k,l)$ is of type $(p,q)$ with $p+q> 2$, it never follows that $k=l$, $k^{(1)}=l^{(1)}$ with $k\neq l$, $k^{(1)}=l$, $k=l^{(1)}$, $k^{(2)}=l$, or $k=l^{(2)}$, since in such cases $(k,l)$ becomes of type $(p,q)$ with $p+q=$ 0, 2, 1, 1, 2 and 2, respectively.
Then the result immediately follows from \cite[Lemma~10]{Dic09}.

Now assume that the result holds true for $r-1$. That is, we assume that
\begin{align}\label{eq:sparse_walsh_proof_1}
\hat{b}_{r-1,\per}(k,l)=0 \quad \text{for $k,l\in \NN_0$ of type $(p,q)$ with $p+q>r-1$}.
\end{align}
If either $k=0$ or $l=0$ holds, the result $\hat{b}_{r,\per}(k,l)=0$ immediately follows from \cite[Lemma~11]{Dic09}.
Thus we focus on the case $k,l>0$ in the following.
Moreover, since $\hat{b}_{r,\per}(l,k)=\overline{\hat{b}_{r,\per}(k,l)}$, we can restrict ourselves to the case $p>r/2$.
As in \cite[Equation~14.18]{DPbook}, for any $k,l\in \NN$ and $r>2$ we have the identity
\begin{align*}
\hat{b}_{r,\per}(k,l) & = -\frac{1}{b^{c_1}}\Big(\frac{1}{1-\omega_b^{-\kappa_1}}\hat{b}_{r-1,\per}(k^{(1)},l)+\left( \frac{1}{2}+\frac{1}{\omega_b^{-\kappa_1}-1}\right)\hat{b}_{r-1,\per}(k,l) \\
& \qquad \qquad + \sum_{a=1}^{\infty}\sum_{\theta=1}^{b-1}\frac{1}{b^a(\omega_b^{\theta}-1)}\hat{b}_{r-1,\per}(\theta b^{a+c_1-1}+ k,l) \Big) .
\end{align*}
Thus, in order to prove $\hat{b}_{r,\per}(k,l)=0$ for $k,l\in \NN$ of type $(p,q)$ with $p+q> r$, it suffices to prove that (i) $\hat{b}_{r-1,\per}(k^{(1)},l)=0$, (ii) $\hat{b}_{r-1,\per}(k,l)=0$ and (iii) $\hat{b}_{r-1,\per}(\theta b^{a+c_1-1}+ k,l)=0$ for any $a\in \NN$ and $1\leq \theta\leq b-1$ whenever $p+q>r$ and $p>r/2$.

First, from the induction assumption (\ref{eq:sparse_walsh_proof_1}), it is trivial that $\hat{b}_{r-1,\per}(k,l)=0$ also for $k,l\in \NN$ of type $(p,q)$ with $p+q>r$. The remaining two items (i) and (iii) can be proven in the following way.

As $k^{(1)}$ can be written as
\begin{align*}
k^{(1)} = \sum_{i=2}^{p}\kappa_ib^{c_i-1}+\sum_{i=q+1}^{w}\lambda_{i}b^{d_i-1},
\end{align*}
where $\kappa_pb^{c_p-1}\neq \lambda_qb^{d_q-1}$, $(k^{(1)},l)$ is of type $(p-1,q)$, where $p-1+q>r-1$.
Thus, again from the induction assumption (\ref{eq:sparse_walsh_proof_1}), it follows that $\hat{b}_{r-1,\per}(k^{(1)},l)=0$, which completes the proof of the item (i).

Similarly, as $\theta b^{a+c_1-1}+ k$ can be written as
\begin{align*}
\theta b^{a+c_1-1}+ k = \theta b^{a+c_1-1}+ \sum_{i=1}^{p}\kappa_ib^{c_i-1}+\sum_{i=q+1}^{w}\lambda_{i}b^{d_i-1},
\end{align*}
where $\kappa_pb^{c_p-1}\neq \lambda_qb^{d_q-1}$, for any $a\in \NN$ and $1\leq \theta\leq b-1$, $(\theta b^{a+c_1-1}+ k,l)$ is of type $(p+1,q)$, where $p+1+q>r+1$.
Again from the induction assumption (\ref{eq:sparse_walsh_proof_1}), it follows that $\hat{b}_{r-1,\per}(\theta b^{c+a_1-1}+ k,l)=0$, which completes the proof of the item (iii).
\end{proof}

Let us move on to the high-dimensional case. As a corollary of Proposition~\ref{prop:sparse_walsh}, the sparsity of the Walsh coefficients $\hat{K}_{\alpha,s}$ can be formulated as follows.
\begin{corollary}\label{cor:sparse_walsh}
Let $\bsk=(k_1,\ldots,k_s),\bsl=(l_1,\ldots,l_s)\in\NN_0$ such that $(k_j,l_j)$ is of type $(p_j,q_j)$ with $p_j+q_j> 2\alpha$ for at least one index $j\in \{1,\ldots,s\}$.
Then we have $\hat{K}_{\alpha,s}(\bsk,\bsl)=0$.
\end{corollary}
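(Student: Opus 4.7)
The plan is to derive Corollary~\ref{cor:sparse_walsh} as a direct consequence of Proposition~\ref{prop:sparse_walsh} combined with the tensor product structure of the reproducing kernel $K_{\alpha,s}$.

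First, I would recall the factorization of the Walsh coefficients of the $s$-dimensional kernel established in equation (\ref{eq:walsh_product}):
\begin{equation*}
\hat{K}_{\alpha,s}(\bsk,\bsl) = \prod_{j=1}^{s}\hat{K}_{\alpha}(k_j,l_j).
\end{equation*}
This is the only structural input needed; everything else reduces to the one-dimensional statement.

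Next, by the assumption of the corollary there exists at least one coordinate $j_0 \in \{1,\ldots,s\}$ for which $(k_{j_0}, l_{j_0})$ is of type $(p_{j_0}, q_{j_0})$ with $p_{j_0} + q_{j_0} > 2\alpha$. Applying Proposition~\ref{prop:sparse_walsh} to this single pair $(k_{j_0}, l_{j_0})$ immediately yields $\hat{K}_{\alpha}(k_{j_0}, l_{j_0}) = 0$. Plugging this vanishing factor into the product above kills the entire expression, giving $\hat{K}_{\alpha,s}(\bsk,\bsl) = 0$, as required.

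There is essentially no obstacle: all of the analytic and combinatorial work has already been carried out in Lemmas~\ref{lem:sparse_walsh_1} and \ref{lem:sparse_walsh_2} (and hence in Proposition~\ref{prop:sparse_walsh}), and the passage to $s$ dimensions is purely a matter of invoking the product formula. The proof will therefore be only a couple of lines long, and the only thing to be careful about is to make explicit that the existence of a single bad coordinate $j_0$ suffices to annihilate the product, independently of the types of the remaining pairs $(k_j,l_j)$ for $j \neq j_0$.
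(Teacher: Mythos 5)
Your proof is correct and is essentially identical to the paper's: invoke Proposition~\ref{prop:sparse_walsh} on the single bad coordinate to get $\hat{K}_{\alpha}(k_{j_0},l_{j_0})=0$, then conclude via the product formula (\ref{eq:walsh_product}). Nothing is missing.
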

\begin{proof}
For an index $j\in \{1,\ldots,s\}$ such that $(k_j,l_j)$ is of type $(p_j,q_j)$ with $p_j+q_j> 2\alpha$, it follows from Proposition~\ref{prop:sparse_walsh} that $\hat{K}_{\alpha}(k_j,l_j)=0$.
The result then follows from (\ref{eq:walsh_product}).
\end{proof}

%%%%%%%%%%%%%%%%%%%%%%%%%%%%%%%%%%%%%%%%%%%%%%%%%%%%%%%%%%% Section 3.3
%%%%%%%%%%%%%%%%%%%%%%%%%%%%%%%%%%%%%%%%%%%%%%%%%%%%%%%%%%%
\subsection{A bound on $|J_{\alpha}(z)|$}\label{subsec:cardinality}
Here we give a bound on the cardinality of $J_{\alpha}(z)$ defined in (\ref{eq:def_Jalpha}) by using the result on the sparsity of the Walsh coefficients $\hat{K}_{\alpha,s}$ and the property of higher order digital nets.
For this purpose, we introduce the following two sets
\begin{align*}
R_{\alpha}(\bsk, z_2) & := \{\bsl\in P^{\perp} \setminus \{\bszero\} \colon \mu_1(\bsl)=z_2, \hat{K}_{\alpha,s}(\bsk,\bsl)\neq 0\}, \\
J_{\alpha}(z_1,z_2) & := \{(\bsk ,\bsl)\in (P^{\perp} \setminus \{\bszero\})^2 \colon \mu_1(\bsk)=z_1, \mu_1(\bsl)=z_2, \hat{K}_{\alpha,s}(\bsk,\bsl)\neq 0\},
\end{align*}
for $z_1,z_2\in \NN$ and $\bsk\in \NN_0^s$.
In what follows, $\binom{i}{j}$ denotes the binomial coefficient, where we set $\binom{i}{j}=0$ if $j>i$.

We first give a bound on $|R_{\alpha}(\bsk, z_2)|$.
It is obvious from Definition~\ref{def:min_Dick_metric} that $\mu_1(\bsl)\geq \rho_1(P)$ for $\bsl\in P^{\perp} \setminus \{\bszero\}$.
Thus $|R_{\alpha}(\bsk, z_2)|=0$ for $z_2< \rho_1(P)$.
For $z_2\geq \rho_1(P)$ the following holds true.
\begin{lemma}\label{lem:bound_Ralpha}
Let $P$ be an order $2\alpha+1$ digital $(t,m,s)$-net over $\FF_b$. For $z_2\geq \rho_1(P)$ and $\bsk\in \NN_0^s$, we have
\begin{align*}
|R_{\alpha}(\bsk, z_2)| \ll_{\alpha,b,s} \prod_{i=1}^{2\alpha}\binom{(i+1)z_2-\rho_{i+1}(P)+s}{s} .
\end{align*}
\end{lemma}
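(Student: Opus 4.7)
The plan is to parameterize each $\bsl \in R_\alpha(\bsk, z_2)$ by a bounded amount of combinatorial data. By Corollary~\ref{cor:sparse_walsh}, the condition $\hat{K}_{\alpha, s}(\bsk, \bsl) \neq 0$ forces each pair $(k_j, l_j)$ to have type $(p_j, q_j)$ with $p_j + q_j \leq 2\alpha$, so the number of admissible type patterns $(p_j, q_j)_{j=1}^{s}$ is at most a constant depending only on $\alpha$ and $s$. Fix such a pattern. By the definition of type, the bottom $w_j - q_j = v_j - p_j$ nonzero digits of $l_j$ (both positions and digit values) coincide with those of $k_j$, so only the top $q_j$ positions $d_{1, j} > d_{2, j} > \cdots > d_{q_j, j}$ (lying above $c_{p_j + 1, j}$, or above $0$ if $p_j = v_j$) and their digit values in $\FF_b \setminus \{0\}$ remain free. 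The digit values contribute a further factor depending only on $\alpha, b, s$, so it suffices to bound the number of position tuples.

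To this end I introduce the quantities
\[
a_{i, j} := (i+1)\mu_1(l_j) - \mu_{i+1}(l_j), \qquad 1 \leq i \leq 2\alpha, \ 1 \leq j \leq s.
\]
Nonnegativity follows because $\mu_{i+1}(l_j)$ is a sum of at most $i+1$ positions of $l_j$, each bounded by the top position $\mu_1(l_j)$. Summing over $j$ and combining $\mu_1(\bsl) = z_2$ with $\mu_{i+1}(\bsl) \geq \rho_{i+1}(P)$ (which holds since $\bsl \in P^\perp \setminus \{\bszero\}$), one obtains
\[
\sum_{j=1}^{s} a_{i, j} = (i+1) z_2 - \mu_{i+1}(\bsl) \leq (i+1) z_2 - \rho_{i+1}(P).
\]
The standard weak-composition count shows that the number of nonnegative integer tuples $(a_{i, 1}, \ldots, a_{i, s})$ satisfying this inequality is at most $\binom{(i+1) z_2 - \rho_{i+1}(P) + s}{s}$, and taking the product over $i \in \{1, \ldots, 2\alpha\}$ yields exactly the claimed bound.

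The final and most delicate step is to verify that, for a fixed type pattern and digit values, the map $\bsl \mapsto (a_{i, j})_{i, j}$ is injective, so that the bound on tuples is indeed a bound on $|R_\alpha(\bsk, z_2)|$. Coordinatewise, for $1 \leq i \leq q_j - 1$ (when $q_j \geq 2$) a direct computation gives $a_{i, j} - a_{i - 1, j} = d_{1, j} - d_{i + 1, j}$ with the convention $a_{0, j} := 0$, while $a_{q_j, j} - a_{q_j - 1, j}$ equals $d_{1, j}$ if $p_j = v_j$ and $d_{1, j} - c_{p_j + 1, j}$ otherwise, thereby recovering $d_{1, j}$ up to a constant determined by $\bsk$ and the type. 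The earlier differences then sequentially recover $d_{2, j}, \ldots, d_{q_j, j}$; the cases $q_j = 0$ and $q_j = 1$ are immediate. I expect this injectivity verification---which requires a mild case analysis on whether $q_j$ is $0$, $1$, or at least $2$, and on whether $p_j < v_j$ or $p_j = v_j$---to be the main technical point of the proof, while the counting and application of $\rho_{i+1}(P)$ are routine.
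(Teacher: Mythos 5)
Your proof is correct and follows essentially the same route as the paper: restrict to type patterns with $p_j+q_j\le 2\alpha$ via the sparsity corollary, injectively encode each admissible $\bsl$ (given $\bsk$, the type pattern, and the top digit values) by nonnegative integer tuples whose coordinate sums over $j$ are bounded by $(i+1)z_2-\rho_{i+1}(P)$ using $\mu_{i+1}(\bsl)\ge\rho_{i+1}(P)$, and count weak compositions. The only cosmetic difference is your choice of coordinates $a_{i,j}=(i+1)\mu_1(l_j)-\mu_{i+1}(l_j)$ in place of the paper's digit-position gaps $d_{i,j}-d_{i+1,j}$; both encodings reconstruct the free top positions of $l_j$ from the data in the same manner, so the resulting bound is identical.
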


\begin{proof}
For $\bsp=(p_1,\ldots,p_s),\bsq=(q_1,\ldots,q_s)\in \NN_0^s$, define the set
\begin{align*}
R_{\bsp,\bsq}(\bsk, z_2) & := \{\bsl\in P^{\perp} \setminus \{\bszero\} \colon \mu_1(\bsl)=z_2, \text{$(k_j,l_j)$ is of type $(p_j,q_j)$}\}.
\end{align*}
It follows from Corollary~\ref{cor:sparse_walsh} that in order to have $\hat{K}_{\alpha,s}(\bsk,\bsl)\neq 0$, $(k_j,l_j)$ must be of type $(p_j,q_j)$ with $p_j+q_j\leq 2\alpha$ for all $j=1,\ldots,s$.
Thus we have
\begin{align}\label{eq:Ralpha_inclusion}
R_{\alpha}(\bsk, z_2) \subset \bigcup_{\substack{\bsp,\bsq\in \NN_0^s\\ p_j+q_j\leq 2\alpha, \forall j}}R_{\bsp,\bsq}(\bsk, z_2) .
\end{align}
Note that each $R_{\bsp,\bsq}(\bsk, z_2)$ is a subset of $P^{\perp}$ and that the above union is disjoint.
Further define the set
\begin{align*}
S_{2\alpha}(z_2) & := \Big\{(\delta_{i,j},\zeta_{i,j})_{1\leq i\leq 2\alpha, 1\leq j\leq s}\in (\NN_0 \times \{1,\ldots,b-1\})^{2s\alpha} \\
& \qquad \colon \sum_{j=1}^{s}\delta_{i,j}\leq (i+1)z_2-\rho_{i+1}(P),\quad i=1,\ldots,2\alpha \Big\} ,
\end{align*}
and the mapping $\phi_{2\alpha}$ from $\NN_0^s$ to $(\NN_0 \times \{1,\ldots,b-1\})^{2s\alpha}$ by
\begin{align*}
(l_j)_{1\leq j\leq s} \mapsto (d_{i,j}-d_{i+1,j},\lambda_{i,j})_{1\leq i\leq 2\alpha, 1\leq j\leq s},
\end{align*}
where we denote the $b$-adic expansion of $l_j$ by $l_j=\sum_{i=1}^{w_j}\lambda_{i,j}b^{d_{i,j}-1}$. For $i>w_{j}$, we set $(d_{i,j},\lambda_{i,j})=(0,1)$.

First we show that the image of the restriction of $\phi_{2\alpha}$ to $R_{\bsp,\bsq}(\bsk, z_2)$ is included in $S_{2\alpha}(z_2)$ for any $\bsp,\bsq\in \NN_0^s$ with $p_j+q_j\leq 2\alpha$.
From Definition~\ref{def:min_Dick_metric} we have
\begin{align*}
\mu_{i}(\bsl) = \sum_{j=1}^{s}\sum_{l=1}^{i}d_{l,j} \geq \rho_{i}(P),
\end{align*}
for any $i \in \NN$ and $\bsl\in P^{\perp}\setminus \{\bszero\}$. Thus it follows that again for any $i\in \NN$
\begin{align*}
\sum_{j=1}^{s}(d_{i,j}-d_{i+1,j}) & = \sum_{j=1}^{s}d_{i,j} -\left(\sum_{j=1}^{s}\sum_{l=1}^{i+1}d_{l,j}-\sum_{j=1}^{s}\sum_{l=1}^{i}d_{l,j}\right) \\
& \leq \sum_{j=1}^{s}d_{i,j}-\rho_{i+1}(P)+\sum_{j=1}^{s}\sum_{l=1}^{i}d_{l,j} \\
& \leq \sum_{j=1}^{s}d_{1,j}-\rho_{i+1}(P)+\sum_{j=1}^{s}i d_{1,j} \\
& = (i+1)\mu_{1}(\bsl)-\rho_{i+1}(P) = (i+1)z_2-\rho_{i+1}(P) .
\end{align*}
Thus it implies that $(d_{i,j}-d_{i+1,j},\lambda_{i,j})_{1\leq i\leq 2\alpha, 1\leq j\leq s}\in S_{2\alpha}(z_2)$.

Next we show that the restricted map $\phi_{2\alpha}|_{R_{\bsp,\bsq}(\bsk, z_2)}\colon R_{\bsp,\bsq}(\bsk, z_2)\to S_{2\alpha}(z_2)$ is injective.
We denote the $b$-adic expansion of $k_j$ by $k_j=\sum_{i=1}^{v_j}\kappa_{i,j}b^{c_{i,j}-1}$ for $\bsk = (k_1, \dots, k_s)\in \NN_0^s$.
Let $\bsl \in R_{\bsp,\bsq}(\bsk, z_2)$ and $\phi_{2\alpha}(\bsl)=(\delta_{i,j},\zeta_{i,j})_{1\le i\le 2\alpha,1\le j\le s}$.
It suffices to show that $\bsl$ is determined by $(\delta_{i,j},\zeta_{i,j})_{1\le i\le 2\alpha, 1\le j\le s}$, $\bsk$, $\bsp$ and $\bsq$.
Since $(k_j,l_j)$ is of type $(p_j,q_j)$, we have
\[
\sum_{i=q_j+1}^{w_j}\lambda_{i,j}b^{d_{i,j}-1} = \sum_{i=p_j+1}^{v_j}\kappa_{i,j}b^{c_{i,j}-1},
\]
where the empty sum equals 0, and $d_{q_j + 1,j} = c_{p_j + 1,j}$.
Note that if $q_j = w_j$, which is equivalent to $p_j = v_j$, then we set $d_{w_j + 1,j} = c_{v_j + 1,j} = 0$.
Further, by the definition of $\phi_{2\alpha}$ and the fact $q_j \leq 2\alpha$, for all $1 \leq i \leq q_j$ we have
\[\lambda_{i,j}=\zeta_{i,j} % for all $1\leq i\le w_j$
\quad \text{and}
\quad d_{i,j} = \sum_{h=i}^{q_j} \delta_{h,j} + d_{q_j + 1,j} = \sum_{h=i}^{q_j} \delta_{h,j} + c_{p_j + 1,j}.
\]
Thus $l_j$ can be determined by $(\delta_{i,j},\zeta_{i,j})_{1\le i\le 2\alpha}$, $k_j$, $p_j$ and $q_j$ as
\begin{align*}
l_j = \sum_{i=1}^{w_j}\lambda_{i,j}b^{d_{i,j}-1} = \sum_{i=1}^{q_j}\zeta_{i,j}b^{c_{p_j+1,j}+\sum_{h=i}^{q_j}\delta_{h,j}-1}+\sum_{i=p_j+1}^{v_j}\kappa_{i,j}b^{c_{i,j}-1},
\end{align*}
where the empty sum equals 0.
This implies the injectivity of $\phi_{2\alpha}|_{R_{\bsp,\bsq}(\bsk, z_2)}$.

By using (\ref{eq:Ralpha_inclusion}) and the result that $\phi_{2\alpha}|_{R_{\bsp,\bsq}(\bsk, z_2)}\colon R_{\bsp,\bsq}(\bsk, z_2)\to S_{2\alpha}(z_2)$ is injective, we obtain
\begin{align*}
|R_{\alpha}(\bsk, z_2)| & \leq \sum_{\substack{\bsp,\bsq\in \NN_0^s\\ p_j+q_j\leq 2\alpha, \forall j}}|R_{\bsp,\bsq}(\bsk, z_2)| \leq \sum_{\substack{\bsp,\bsq\in \NN_0^s\\ p_j+q_j\leq 2\alpha, \forall j}}|S_{2\alpha}(z_2)| \\
& = ((\alpha+1)(2\alpha+1))^s |S_{2\alpha}(z_2)| \ll_{\alpha,s}|S_{2\alpha}(z_2)|,
\end{align*}
where the number $(\alpha+1)(2\alpha+1)$ represents the total number of possible combinations of $p,q\in \NN_0$ such that $p+q\leq 2\alpha$.
Finally the cardinality of $S_{2\alpha}(z_2)$ is given by
\begin{align*}
|S_{2\alpha}(z_2)| & = (b-1)^{2\alpha s}\prod_{i=1}^{2\alpha}\left| \left\{(\delta_{i,j})_{1\leq j\leq s}\in \NN_0^{s} \colon \sum_{j=1}^{s}\delta_{i,j}\leq (i+1)z_2-\rho_{i+1}(P) \right\}\right| \\
& = (b-1)^{2\alpha s}\prod_{i=1}^{2\alpha}\sum_{r_i=0}^{(i+1)z_2-\rho_{i+1}(P)}\left| \left\{(\delta_{i,j})_{1\leq j\leq s}\in \NN_0^{s} \colon \sum_{j=1}^{s}\delta_{i,j}=r_i \right\}\right| \\
& = (b-1)^{2\alpha s}\prod_{i=1}^{2\alpha}\sum_{r_i=0}^{(i+1)z_2-\rho_{i+1}(P)}\binom{r_i+s-1}{s-1} \\
& = (b-1)^{2\alpha s}\prod_{i=1}^{2\alpha}\binom{(i+1)z_2-\rho_{i+1}(P)+s}{s},
\end{align*}
which completes the proof.
\end{proof}

We move on to a bound on $|J_{\alpha}(z_1,z_2)|$.
It is again obvious from Definition~\ref{def:min_Dick_metric} that $\mu_1(\bsk),\mu_1(\bsl)\geq \rho_1(P)$ for $\bsk,\bsl\in P^{\perp} \setminus \{\bszero\}$.
Thus $|J_{\alpha}(z_1,z_2)|=0$ when either $z_1< \rho_1(P)$ or $z_2< \rho_1(P)$.
For $z_1,z_2\geq \rho_1(P)$ the following holds true.
\begin{lemma}\label{lem:bound_Jalpha2}
Let $P$ be an order $2\alpha+1$ digital $(t,m,s)$-net over $\FF_b$. For $z_1,z_2\geq \rho_1(P)$, we have
\begin{align*}
|J_{\alpha}(z_1,z_2)| \ll_{\alpha,b,s} b^{z_1-\rho_1(P)+1}\binom{z_1+s-1}{s-1}\prod_{i=1}^{2\alpha}\binom{(i+1)z_2-\rho_{i+1}(P)+s}{s} .
\end{align*}
\end{lemma}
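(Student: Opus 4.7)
The plan is to bound $|J_{\alpha}(z_1,z_2)|$ by separating the contributions of $\bsk$ and $\bsl$. First I would write
\[
|J_{\alpha}(z_1,z_2)| = \sum_{\substack{\bsk\in P^{\perp}\setminus \{\bszero\} \\ \mu_1(\bsk)=z_1}} |R_{\alpha}(\bsk,z_2)|,
\]
and invoke Lemma~\ref{lem:bound_Ralpha} to control each $|R_{\alpha}(\bsk,z_2)|$ by the $\bsk$-independent factor $\prod_{i=1}^{2\alpha}\binom{(i+1)z_2-\rho_{i+1}(P)+s}{s}$. This reduces the lemma to the counting estimate
\[
N(z_1) := \left|\{\bsk\in P^{\perp}\setminus \{\bszero\} : \mu_1(\bsk)=z_1\}\right| \leq b^{z_1-\rho_1(P)+1}\binom{z_1+s-1}{s-1}.
\]

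For $N(z_1)$, my plan is to stratify by the tuple $\bsc=(c_1,\ldots,c_s)\in\NN_0^s$ of top-nonzero-digit positions $c_j=\mu_1(k_j)$. The number of such tuples with $\sum_j c_j = z_1$ equals $\binom{z_1+s-1}{s-1}$, which is one factor in the target bound. For each fixed $\bsc$, the condition $\bsk\in P^{\perp}$ translates to the $\FF_b$-linear equation
\[
\sum_{j=1}^{s}\sum_{i=1}^{c_j}\kappa_{i,j}\,\bsc_{i,j}=\bszero \quad \text{in } \FF_b^m,
\]
in the $z_1=\sum_j c_j$ unknown digits $\kappa_{i,j}$. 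The number of $\bsk$ with these prescribed top positions is then bounded by $b^{z_1-r_{\bsc}}$, where $r_{\bsc}$ denotes the $\FF_b$-rank of $\{\bsc_{i,j}:1\leq i\leq c_j,\ 1\leq j\leq s\}$.

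The crux will be the lower bound $r_{\bsc}\geq \rho_1(P)-1$. Since $z_1\geq \rho_1(P)$, one can pick a componentwise sub-configuration $\bsc'=(c'_1,\ldots,c'_s)$ with $0\leq c'_j\leq c_j$ and $\sum_j c'_j=\rho_1(P)-1$. Any non-trivial $\FF_b$-linear relation among the $\rho_1(P)-1$ vectors $\{\bsc_{i,j}:1\leq i\leq c'_j,\ 1\leq j\leq s\}$ would, via its coefficients, produce a non-zero $\bsk'\in P^{\perp}$ with $\mu_1(\bsk')\leq\sum_j c'_j=\rho_1(P)-1$, contradicting the definition of the minimum Dick metric. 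Hence those vectors are linearly independent and, being a subset of the vectors defining $r_{\bsc}$, they force $r_{\bsc}\geq\rho_1(P)-1$. Each stratum therefore contributes at most $b^{z_1-\rho_1(P)+1}$, and summing over $\bsc$ followed by reinserting the uniform bound on $|R_{\alpha}(\bsk,z_2)|$ will yield the lemma. I do not anticipate any serious obstacle; the only delicate point is the sub-configuration selection combined with the linear-independence-via-dual-weight argument, which is a direct unfolding of the definition of $\rho_1(P)$.
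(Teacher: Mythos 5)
Your proposal is correct and follows essentially the same route as the paper: bound $|J_{\alpha}(z_1,z_2)|$ by the number of admissible $\bsk$ times the $\bsk$-uniform bound on $|R_{\alpha}(\bsk,z_2)|$ from Lemma~\ref{lem:bound_Ralpha}, and count the $\bsk\in P^{\perp}\setminus\{\bszero\}$ with $\mu_1(\bsk)=z_1$ by stratifying over the profiles $(\mu_1(k_1),\ldots,\mu_1(k_s))$, giving the factor $\binom{z_1+s-1}{s-1}$ times a per-profile count of at most $b^{z_1-\rho_1(P)+1}$. The only difference is that the paper obtains this per-profile count by citing Skriganov's Lemma~2.2, whereas you reprove it directly via the rank and linear-independence argument (a nontrivial relation among at most $\rho_1(P)-1$ rows would yield a nonzero dual element of $\mu_1$-weight below $\rho_1(P)$), which is precisely the content of the cited lemma.
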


\begin{proof}
From the definitions of $R_{\alpha}(\bsk, z_2)$ and $J_{\alpha}(z_1,z_2)$, we have
\begin{align*}
|J_{\alpha}(z_1,z_2)| & \leq |\{\bsk\in P^{\perp} \setminus \{\bszero\} \colon \mu_1(\bsk)=z_1\}| \\
& \qquad \times \max_{\substack{\bsk\in P^{\perp}\setminus \{\bszero\}\\ \mu_1(\bsk)=z_1}}|\{\bsl\in P^{\perp} \setminus \{\bszero\} \colon \mu_1(\bsl)=z_2, \hat{K}_{\alpha,s}(\bsk,\bsl)\neq 0\}| \\
& = |I(z_1)|\max_{\substack{\bsk\in P^{\perp}\setminus \{\bszero\}\\ \mu_1(\bsk)=z_1}}|R_{\alpha}(\bsk, z_2)| ,
\end{align*}
where we write
\begin{align*}
I(z_1) := \{\bsk\in P^{\perp} \setminus \{\bszero\} \colon \mu_1(\bsk)=z_1\} .
\end{align*}
By adapting \cite[Lemma~2.2]{Skr06}, we have
\begin{align*}
|I(z_1)| & = |\{\bsk\in P^{\perp} \setminus \{\bszero\} \colon \mu_1(\bsk)=z_1\}| \\
& = \sum_{\substack{d_1,\ldots,d_s\in \NN_0\\ d_1+\cdots+d_s=z_1}}|\{\bsk\in P^{\perp} \setminus \{\bszero\} \colon \mu_1(k_j)=d_j\}| \\
& \leq \sum_{\substack{d_1,\ldots,d_s\in \NN_0\\ d_1+\cdots+d_s=z_1}}b^{d_1+\cdots+d_s-\rho_1(P)+1} = b^{z_1-\rho_1(P)+1}\binom{z_1+s-1}{s-1}.
\end{align*}
Moreover, the bound on $|R_{\alpha}(\bsk, z_2)|$ obtained in Lemma~\ref{lem:bound_Ralpha} is independent of the choice of $\bsk$.
Hence the result follows.
\end{proof}

Finally we give a bound on $|J_{\alpha}(z)|$. Since $\mu_1(\bsk),\mu_1(\bsl)\geq \rho_1(P)$ for $\bsk,\bsl\in P^{\perp} \setminus \{\bszero\}$, $|J_{\alpha}(z)|=0$ for $z< 2\rho_1(P)$.
For $z\geq 2\rho_1(P)$ we have the following result.
\begin{lemma}\label{lem:bound_Jalpha}
Let $P$ be an order $2\alpha+1$ digital $(t,m,s)$-net over $\FF_b$. For $z\geq 2\rho_1(P)$, we have
\begin{align*}
|J_{\alpha}(z)| \ll_{\alpha, b,s,t} (z-2\rho_1(P))^{2s\alpha+1}z^{s-1}b^{(z-2\rho_1(P))/2}.
\end{align*}
\end{lemma}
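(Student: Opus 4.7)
The plan is to decompose $J_{\alpha}(z)$ according to the individual values of $\mu_1(\bsk)$ and $\mu_1(\bsl)$, apply the bivariate bound of Lemma~\ref{lem:bound_Jalpha2}, and exploit a symmetry of $|J_{\alpha}(z_1,z_2)|$ in its two arguments to absorb the asymmetric exponential factor $b^{z_1-\rho_1(P)+1}$ appearing there. First I would write
\begin{align*}
|J_{\alpha}(z)| = \sum_{\substack{z_1,z_2\geq \rho_1(P) \\ z_1+z_2=z}} |J_{\alpha}(z_1,z_2)|.
\end{align*}
Since $K_{\alpha,s}$ is symmetric in its two arguments, one has $\hat{K}_{\alpha,s}(\bsk,\bsl) = \overline{\hat{K}_{\alpha,s}(\bsl,\bsk)}$, so $(\bsk,\bsl)\mapsto(\bsl,\bsk)$ gives a bijection between $J_{\alpha}(z_1,z_2)$ and $J_{\alpha}(z_2,z_1)$; in particular $|J_{\alpha}(z_1,z_2)| = |J_{\alpha}(z_2,z_1)|$. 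The sum can therefore be restricted to $\rho_1(P)\leq z_1\leq z/2$ at the cost of a factor of $2$, which forces $z_1-\rho_1(P)\leq(z-2\rho_1(P))/2$.

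Next, Lemma~\ref{lem:bound_Jalpha2} is applied in this restricted range. The exponential factor $b^{z_1-\rho_1(P)+1}$ is then at most $b\cdot b^{(z-2\rho_1(P))/2}$, already producing the exponential in the target, while $\binom{z_1+s-1}{s-1}\ll_s z_1^{s-1}\leq z^{s-1}$ yields the required $z^{s-1}$ factor. For the product $\prod_{i=1}^{2\alpha}\binom{(i+1)z_2-\rho_{i+1}(P)+s}{s}$, the propagation rule Lemma~\ref{lem:propagation} shows that $P$ is an order $i+1$ digital $(\lceil t(i+1)/(2\alpha+1)\rceil,m,s)$-net for each $1\leq i\leq 2\alpha$; combined with Lemma~\ref{lem:min_Dick_metric} this gives $\rho_{i+1}(P)>(i+1)m-t$ and $\rho_1(P)\leq m$, so using $z_2\leq z-\rho_1(P)$ I obtain
\begin{align*}
(i+1)z_2-\rho_{i+1}(P) < (i+1)(z_2-m)+t \leq (i+1)(z-2\rho_1(P))+t.
\end{align*}
Thus each such binomial is $\ll_{\alpha,s,t}(z-2\rho_1(P)+1)^s$ and the product over $i$ is $\ll(z-2\rho_1(P)+1)^{2s\alpha}$.

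Finally, the number of admissible $z_1\in[\rho_1(P),z/2]$ is at most $(z-2\rho_1(P))/2+1$, contributing one further factor of $(z-2\rho_1(P)+1)$. Collecting all factors yields the announced bound, with the convention that the borderline case $z=2\rho_1(P)$ (where my derivation gives a harmless constant rather than $0$) is absorbed into the constant in the proof of Theorem~\ref{thm:main}. The main obstacle is precisely the symmetry reduction in the first step: without the Hermitian symmetry of $\hat{K}_{\alpha,s}$, the asymmetric factor $b^{z_1-\rho_1(P)+1}$ in Lemma~\ref{lem:bound_Jalpha2} would force an exponential in $z-\rho_1(P)$ instead of in $(z-2\rho_1(P))/2$, and that discrepancy would be fatal for the optimal exponent of $m$ in Theorem~\ref{thm:main}. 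Once the symmetry is in place, the remaining work reduces to routine bookkeeping via the propagation rule.
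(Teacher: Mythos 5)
Your argument is correct and follows essentially the same route as the paper: the same decomposition over $(z_1,z_2)$ with $z_1+z_2=z$, the same factor-of-two reduction via the Hermitian symmetry $\hat{K}_{\alpha,s}(\bsl,\bsk)=\overline{\hat{K}_{\alpha,s}(\bsk,\bsl)}$ to restrict to $z_1\leq \lfloor z/2\rfloor$, and the same use of Lemma~\ref{lem:bound_Jalpha2} together with Lemmas~\ref{lem:min_Dick_metric} and \ref{lem:propagation} to turn the binomial factors into powers of $z-2\rho_1(P)$ and to count the admissible $z_1$. The borderline case $z=2\rho_1(P)$ that you flag is handled no more carefully in the paper's own proof, so your remark about absorbing it into the constant (equivalently, reading the polynomial factors as $(z-2\rho_1(P)+1)$-powers) is an acceptable and even slightly more honest bookkeeping of the same estimate.
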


\begin{proof}
For $z\geq 2\rho_1(P)$, we have
\begin{align*}
J_{\alpha}(z)= \bigcup_{\substack{z_1,z_2\geq \rho_1(P)\\ z_1+z_2=z}}J_{\alpha}(z_1,z_2).
\end{align*}
Since the sets on the right-hand side above are pairwise disjoint, we have
\begin{align*}
|J_{\alpha}(z)| & = \sum_{\substack{z_1,z_2\geq \rho_1(P)\\ z_1+z_2=z}}|J_{\alpha}(z_1,z_2)| \leq 2\sum_{\substack{\rho_1(P)\leq z_1\leq \lfloor z/2\rfloor \\ z_2=z-z_1}}|J_{\alpha}(z_1,z_2)|
\end{align*}
where the last inequality stems from the symmetry of the Walsh coefficients, i.e., $\hat{K}_{\alpha,s}(\bsl,\bsk)=\overline{\hat{K}_{\alpha,s}(\bsk,\bsl)}$ for any $\bsk,\bsl\in \NN_0^s$.
By using the bound on $|J_{\alpha}(z_1,z_2)|$ given in Lemma~\ref{lem:bound_Jalpha2}, we have
\begin{align*}
|J_{\alpha}(z)| \ll_{\alpha,b,s} \sum_{\substack{\rho_1(P)\leq z_1\leq \lfloor z/2\rfloor \\ z_2=z-z_1}}b^{z_1-\rho_1(P)+1}\binom{z_1+s-1}{s-1}\prod_{i=1}^{2\alpha}\binom{(i+1)z_2-\rho_{i+1}(P)+s}{s}.
\end{align*}
It follows from Lemmas~\ref{lem:min_Dick_metric} and \ref{lem:propagation} that for $1\leq i\leq 2\alpha$
\begin{align*}
\binom{(i+1)z_2-\rho_{i+1}(P)+s}{s}& \leq \binom{(i+1)(z_2-m)+\lceil t(i+1)/(2\alpha+1) \rceil+s-1}{s} \\
& \ll_{\alpha,b,s,t} (z_2-\rho_1(P))^{s}.
\end{align*}
Also we have
\begin{align*}
\binom{z_1+s-1}{s-1} \ll_{s} z_1^{s-1} .
\end{align*}
Hence
\begin{align*}
|J_{\alpha}(z)| & \ll_{\alpha,b,s,t} \sum_{\substack{\rho_1(P)\leq z_1\leq \lfloor z/2\rfloor \\ z_2=z-z_1}}b^{z_1-\rho_1(P)+1}z_1^{s-1}(z_2-\rho_1(P))^{2\alpha s} \\
& \ll_{b,s} (z-2\rho_1(P)) b^{(z-2\rho_1(P))/2} z^{s-1}(z-2\rho_1(P))^{2\alpha s} .
\end{align*}
Thus the result follows.
\end{proof}

%%%%%%%%%%%%%%%%%%%%%%%%%%%%%%%%%%%%%%%%%%%%%%%%%%%%%%%%%%%
%%%%%%%%%%%%%%%%%%%%%%%%%%%%%%%%%%%%%%%%%%%%%%%%%%%%%%%%%%%
\section{Summability of the Walsh coefficients}\label{sec:walsh_sum}
In this section we prove:
\begin{lemma}\label{lem:walsh_sum}
For $\alpha, s\in \NN$, 
\begin{align*}
 \sum_{\bsk,\bsl\in \NN_0^s}\left| \hat{K}_{\alpha,s}(\bsk,\bsl)\right| < \infty.
\end{align*}
\end{lemma}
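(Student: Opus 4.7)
The plan is to first reduce to the one-dimensional case. By the product form (\ref{eq:walsh_product}) together with the independence of indices across coordinates, we have
\[
\sum_{\bsk,\bsl\in \NN_0^s}\left| \hat{K}_{\alpha,s}(\bsk,\bsl)\right|
= \Bigl(\sum_{k,l\in \NN_0}\left| \hat{K}_{\alpha}(k,l)\right|\Bigr)^s,
\]
so it suffices to prove $\sum_{k,l\in \NN_0}|\hat{K}_{\alpha}(k,l)|<\infty$. By (\ref{eq:walsh_bound}) this is further reduced, for the case $\alpha\geq 2$, to showing that $\sum_{k\in\NN_0}b^{-\mu_\alpha(k)}<\infty$.

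For $\alpha\geq 2$, I would count $N_\alpha(z):=|\{k\in\NN_0: \mu_\alpha(k)=z\}|$. Writing the $b$-adic expansion of $k$ as $k=\sum_{i=1}^{v}\kappa_ib^{c_i-1}$ with $c_1>\cdots>c_v>0$, the case $v<\alpha$ contributes only polynomially in $z$, while for $v\geq \alpha$ we have $c_1+\cdots+c_\alpha=z$ and thus $c_\alpha\leq z/\alpha$; below position $c_\alpha$ the digits of $k$ are arbitrary, yielding at most $b^{c_\alpha-1}\leq b^{z/\alpha-1}$ choices. Summing over the top configurations, which contribute a polynomial factor in $z$, gives $N_\alpha(z)\ll_{\alpha,b} z^{\alpha-1}b^{z/\alpha}$. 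Hence
\[
\sum_{k\in\NN_0}b^{-\mu_\alpha(k)}\ll_{\alpha,b}\sum_{z\geq 0} z^{\alpha-1}b^{-z(1-1/\alpha)},
\]
which converges since $1-1/\alpha>0$. Combined with (\ref{eq:walsh_bound}), this settles $\alpha\geq 2$.

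For $\alpha=1$ the above approach fails, because $N_1(z)=(b-1)b^{z-1}$ gives $\sum_k b^{-\mu_1(k)}=\infty$. This is the main obstacle, and it is resolved by exploiting the sparsity result proved in Subsection~\ref{subsec:sparse}: Proposition~\ref{prop:sparse_walsh} forces $\hat K_1(k,l)=0$ unless $(k,l)$ is of type $(p,q)$ with $p+q\leq 2$, i.e., $(p,q)\in\{(0,0),(1,0),(0,1),(2,0),(0,2),(1,1)\}$. I would then split the sum by type and parameterize $(k,l)$ of type $(p,q)$ by the common tail $t=k^{(p)}=l^{(q)}$ together with at most two free top digit-position pairs of $k$ and $l$ lying strictly above $\mu_1(t)$. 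Since $\mu_1(k)+\mu_1(l)$ equals the sum of the top positions of $k$ and $l$, each geometric series over a free top position $c>\mu_1(t)$ contributes a factor $b^{-\mu_1(t)}/(b-1)$, so after summing the top positions one is left with either a constant (when $t=0$) or $\sum_{t\geq 1}b^{-2\mu_1(t)}=\sum_{z\geq 1}(b-1)b^{z-1}b^{-2z}=1/b$. Each of the six types therefore contributes finitely, yielding $\sum_{k,l}|\hat K_1(k,l)|<\infty$ and completing the proof via (\ref{eq:walsh_bound}).
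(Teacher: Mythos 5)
Your proof is correct and follows essentially the same route as the paper: reduce to the one-dimensional case via the product structure, use the decay bound (\ref{eq:walsh_bound}) together with summability of $b^{-\mu_\alpha(k)}$ for $\alpha\geq 2$, and invoke the sparsity result of Proposition~\ref{prop:sparse_walsh} to restrict to the six types $(p,q)$ with $p+q\leq 2$ when $\alpha=1$. The only differences are cosmetic: you verify $\sum_k b^{-\mu_\alpha(k)}<\infty$ by a direct count of $\{k:\mu_\alpha(k)=z\}$ where the paper cites \cite[Lemma~2.10]{Dic07}, and for $\alpha=1$ you organize the type-$(p,q)$ sums by the common tail and geometric series over the free top positions, whereas the paper counts the pairs level by level in $(\mu_1(k),\mu_1(l))$.
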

\noindent We would recall that this summability is needed to ensure the pointwise absolute convergence of the Walsh series of $K_{\alpha,s}$.
\begin{proof}
For $\alpha\geq 2$, by using (\ref{eq:walsh_product}) and (\ref{eq:walsh_bound}), we have
\begin{align*}
 \sum_{\bsk,\bsl\in \NN_0^s}\left| \hat{K}_{\alpha,s}(\bsk,\bsl)\right| & = \sum_{\bsk,\bsl\in \NN_0^s}\prod_{j=1}^{s}\left| \hat{K}_{\alpha,s}(k_j,l_j)\right| = \left( \sum_{k,l\in \NN_0}\left|\hat{K}_{\alpha}(k,l)\right|\right)^s \\
 & \leq  D_{\alpha,b}^s\left( \sum_{k,l\in \NN_0}b^{-\mu_{\alpha}(k)-\mu_{\alpha}(l)}\right)^s = D_{\alpha,b}^s\left( \sum_{k\in \NN_0}b^{-\mu_{\alpha}(k)}\right)^{2s}.
\end{align*}
As shown in \cite[Lemma~2.10]{Dic07}, the last sum is bounded above by $1+\alpha+b^{-2}$. Thus the result follows.

Let $\alpha=1$. Since the sum $\sum_{k\in \NN_0}b^{-\mu_1(k)}$ is no longer finite, we need to exploit the sparsity of the Walsh coefficients obtained in Proposition~\ref{prop:sparse_walsh}. Since
\begin{align*}
 \sum_{\bsk,\bsl\in \NN_0^s}\left| \hat{K}_{1,s}(\bsk,\bsl)\right| & = \left( \sum_{k,l\in \NN_0}\left|\hat{K}_{1}(k,l)\right|\right)^s \leq D_{1,b}^s\left( \sum_{\substack{k,l\in \NN_0 \\ \hat{K}_{1}(k,l)\neq 0}} b^{-\mu_1(k)-\mu_1(l)}\right)^s,
\end{align*}
it suffices to prove that the last sum is finite. It follows from Proposition~\ref{prop:sparse_walsh} that $\hat{K}_{1}(k,l)= 0$ whenever $(k,l)$ is of type $(p,q)$ with $p+q> 2$, so that
\begin{align}
 \sum_{\substack{k,l\in \NN_0 \\ \hat{K}_{1}(k,l)\neq 0}} b^{-\mu_1(k)-\mu_1(l)} & = \sum_{\substack{p,q\in \NN_0\\ p+q\leq 2}}\sum_{\substack{k,l\in \NN_0 \\ \text{$(k,l)$: type $(p,q)$}}} b^{-\mu_1(k)-\mu_1(l)} \nonumber \\
 & = \sum_{\substack{p,q\in \NN_0\\ p+q\leq 2}}\sum_{z_1,z_2\in \NN_0}b^{-z_1-z_2}|\Jcal_{p,q}(z_1,z_2)|,\label{eq:walsh_sum_smoothness1}
\end{align}
where we define the set
\begin{align*}
 \Jcal_{p,q}(z_1,z_2) := \{(k,l)\in \NN_0^s\colon \mu_1(k)=z_1,\mu_1(l)=z_2, \text{$(k,l)$ is of type $(p,q)$}\}.
\end{align*}
In what follows, we estimate the cardinality of $\Jcal_{p,q}(z_1,z_2)$ and the inner sum of (\ref{eq:walsh_sum_smoothness1}) for possible choices of $p$ and $q$. Due to the symmetry of $k$ and $l$, we only consider the case $p\geq q$.
\begin{enumerate}
\item $p=q=0$: Since it follows that $k=l$, $\Jcal_{p,q}(z_1,z_2)$ is empty if $z_1\neq z_2$. Otherwise, the cardinality of $\Jcal_{p,q}(z_1,z_2)$ equals the number of possible choices for $k$ such that $\mu_1(k)=z_1$. Thus we have
\begin{align*}
 |\Jcal_{p,q}(z_1,z_2)| = \begin{cases}
 0 & \text{if $z_1\neq z_2$,} \\
 \lceil b^{z_1-1}(b-1)\rceil & \text{otherwise,}
 \end{cases}
\end{align*}
and 
\begin{align*}
 \sum_{z_1,z_2\in \NN_0}b^{-z_1-z_2}|\Jcal_{p,q}(z_1,z_2)| = \sum_{z_1\in \NN_0}b^{-2z_1}\lceil b^{z_1-1}(b-1)\rceil = 1+\frac{1}{b}.
\end{align*}
\item $p=1,q=0$: Since $k$ must be of the form $\kappa_1 b^{z_1-1}+l$, $\Jcal_{p,q}(z_1,z_2)$ is empty if $z_1\leq z_2$. Otherwise, the cardinality of $\Jcal_{p,q}(z_1,z_2)$ equals the number of possible choices for $l$ such that $\mu_1(l)=z_2$ times the number of possible choices for $\kappa_1\in \{1,\ldots,b-1\}$. Thus we have
\begin{align*}
 |\Jcal_{p,q}(z_1,z_2)| = \begin{cases}
 0 & \text{if $z_1\leq z_2$,} \\
 \lceil b^{z_2-1}(b-1)\rceil (b-1)& \text{otherwise,}
 \end{cases}
\end{align*}
and 
\begin{align*}
 \sum_{z_1,z_2\in \NN_0}b^{-z_1-z_2}|\Jcal_{p,q}(z_1,z_2)| & = \sum_{z_1>z_2}b^{-z_1-z_2}\lceil b^{z_2-1}(b-1)\rceil (b-1) \\
 & \leq (b-1)\sum_{z_1>z_2}b^{-z_1-z_2}b^{z_2} \\
 & = (b-1)\sum_{z_1=1}^{\infty}b^{-z_1}\sum_{z_2=0}^{z_1-1}1 \\
 & = (b-1)\sum_{z_1=1}^{\infty}b^{-z_1}z_1 < \infty.
\end{align*}
\item $p=2,q=0$: Since $k$ must be of the form $\kappa_1 b^{z_1-1}+\kappa_2 b^{c_2-1}+l$, $\Jcal_{p,q}(z_1,z_2)$ is empty if $z_1\leq z_2+1$. Otherwise, the cardinality of $\Jcal_{p,q}(z_1,z_2)$ equals the number of possible choices for $l$ such that $\mu_1(l)=z_2$ times the number of possible choices for $\kappa_1,\kappa_2\in \{1,\ldots,b-1\}$ and $c_2\in \{z_2+1,\ldots,z_1-1\}$. Thus we have
\begin{align*}
 |\Jcal_{p,q}(z_1,z_2)| = \begin{cases}
 0 & \text{if $z_1\leq z_2+1$,} \\
 \lceil b^{z_2-1}(b-1)\rceil (b-1)^2(z_1-z_2-1)& \text{otherwise,}
 \end{cases}
\end{align*}
and
\begin{align*}
 & \sum_{z_1,z_2\in \NN_0}b^{-z_1-z_2}|\Jcal_{p,q}(z_1,z_2)| \\
 & = \sum_{z_1>z_2+1}b^{-z_1-z_2}\lceil b^{z_2-1}(b-1)\rceil (b-1)^2(z_1-z_2-1) \\
 & \leq (b-1)^2\sum_{z_1>z_2+1}b^{-z_1-z_2}b^{z_2} (z_1-z_2-1) \\
 & \leq (b-1)^2\sum_{z_1=2}^{\infty}b^{-z_1}\sum_{z_2=0}^{z_1-2} (z_1-z_2-1) \\
 & = \frac{(b-1)^2}{2}\sum_{z_1=2}^{\infty}b^{-z_1}(z_1-1)z_1 < \infty.
\end{align*}
\item $p=q=1$: Since $k$ and $l$ are given of the forms $k=\kappa_1b^{z_1-1}+k'$ and $l=\lambda_1b^{z_2-1}+k'$, respectively, with some $0\leq k'<b^{\min(z_1,z_2)-1}$ and $\kappa_1b^{z_1-1} \neq \lambda_1b^{z_2-1}$, the cardinality of $\Jcal_{p,q}(z_1,z_2)$ equals the number of possible choices for $k'$ times the number of possible choices for $\kappa_1,\lambda_1\in \{1,\ldots,b-1\}$. Note that if $z_1=z_2$, we have $\kappa_1 \neq \lambda_1$. Thus we have
\begin{align*}
 |\Jcal_{p,q}(z_1,z_2)| & = \begin{cases}
 \lceil b^{\min(z_1,z_2)-1}\rceil (b-1)(b-2) & \text{if $z_1=z_2$,} \\
 \lceil b^{\min(z_1,z_2)-1}\rceil (b-1)^2 & \text{otherwise,}
 \end{cases}\\
 & \leq b^{\min(z_1,z_2)}(b-1),
\end{align*}
and
\begin{align*}
 \sum_{z_1,z_2\in \NN_0}b^{-z_1-z_2}|\Jcal_{p,q}(z_1,z_2)| & = (b-1)\sum_{z_1,z_2\in \NN_0}b^{-z_1-z_2}b^{\min(z_1,z_2)} \\
 & \leq 2(b-1)\sum_{z_1\geq z_2\geq 0}b^{-z_1-z_2}b^{z_2} \\
 & = 2(b-1)\sum_{z_1= 0}^{\infty}b^{-z_1}\sum_{z_2=0}^{z_1}1 \\
 & = 2(b-1)\sum_{z_1= 0}^{\infty}b^{-z_1}(z_1+1)  < \infty.
\end{align*}
\end{enumerate}
In this way, since we only have six choices for $p$ and $q$, and for each choice the inner sum of (\ref{eq:walsh_sum_smoothness1}) is shown to be finite, (\ref{eq:walsh_sum_smoothness1}) itself is also finite. Thus the result follows.
\end{proof}
%%%%%%%%%%%%%%%%%%%%%%%%%%%%%%%%%%%%%%%%%%%%%%%%%%%%%%%%%%%
%%%%%%%%%%%%%%%%%%%%%%%%%%%%%%%%%%%%%%%%%%%%%%%%%%%%%%%%%%%
\section*{Acknowledgement}
The work of T.~G. is supported by JSPS Grant-in-Aid for Young Scientists No.15K20964.
The work of K.~S. and T.~Y. is supported by Australian Research Council's Discovery Projects funding scheme (project number DP150101770).
The authors are grateful to the reviewers for many helpful comments, particularly, for asking for the endpoint case $\alpha=1$.

%%%%%%%%%%%%%%%%%%%%%%%%%%%%%%%%%%%%%%%%%%%%%%%%%%%%%%%%%%%
%%%%%%%%%%%%%%%%%%%%%%%%%%%%%%%%%%%%%%%%%%%%%%%%%%%%%%%%%%%

\end{document}